\newcommand{\comment}[1]{\marginpar{\sffamily{\tiny #1
\par}\normalfont}}
\renewcommand{\comment}[1]{}
\def\atom{\mathop{\rm atom}}
\def\coat{\mathop{\rm coat}}
\def\Aa{{\cal A}}
\def\Ff{{\cal F}}
\def\Gg{{\cal G}}
\def\corank{\mathop{\rm corank}}
\def\rank{\mathop{\rm rank}}
\def\sign{\mathop{\rm sign}}
\newtheorem{thm}{Theorem}[section]
\newtheorem{lemma}[thm]{Lemma}
\newtheorem{prop}[thm]{Proposition}
\newtheorem{cor}[thm]{Corollary}
\newtheorem{defn}[thm]{Definition}
\newtheorem{nota}[thm]{Notation}
\newenvironment{numpf}[1]{{\it #1}}{\hfill$\Box$}
\newtheorem*{TRT}{Topological Representation Theorem}
\newtheorem*{QTA}{Quillen's Theorem A}
\def\beginwval#1#2{\bgroup
  \edef\@savecount{\the\value{#1}}
  \expandafter\def\csname the#1\endcsname{#2}
  \begin{#1}}
\def\endwval#1{\setcounter{#1}{\@savecount}\end{#1}\egroup}
\begin{document}
\title{Homotopy sphere representations for matroids}
\author{Laura Anderson}
\date{}
\maketitle
\begin{abstract} For any rank $r$ oriented matroid $M$, a construction is given of a "topological representation" of $M$ by an  arrangement of homotopy spheres in a simplicial complex which is homotopy equivalent to $S^{r-1}$. The construction is completely explicit and depends only on a choice of maximal flag in $M$. If $M$ is orientable, then all Folkman-Lawrence representations of all orientations of $M$ embed in this representation in a homotopically nice way. 
\end{abstract}

A fundamental result in oriented matroid theory is the Topological Representation Theorem (\cite{FL}), which says that every rank $r$  \emph{oriented} matroid can be represented by an arrangement of oriented pseudospheres in $S^{r-1}$. In~\cite{Ed} Swartz made the startling discovery that any rank $r$ \emph{matroid} can be represented by an arrangement of homotopy spheres in a $(r-1)$-dimensional CW complex homotopic to $S^{r-1}$. The representation is far from canonical: it depends on, among other things, a choice of tree for each rank 2 contraction and choices of
cells glued in to kill off homotopy groups.

The present paper, inspired by Swartz's work, gives a topological
representation of any rank $r$ matroid by an arrangement of homotopy spheres in a simplicial complex which is homotopy equivalent to $S^{r-1}$. The construction is completely explicit and depends only on a choice of maximal flag. For oriented matroids, there is a nice homotopy relationship between this representation and the representation given by the Topological Representation Theorem of Folkman and Lawrence.

\section{Matroids background}

There are many equivalent characterizations of matroids. We will define matroids in terms of {\em flats}.

\begin{defn} A \textbf{geometric lattice} is a finite ranked poset $L$ such 
that:
\begin{enumerate}
\item $L$ is a lattice, i.e., every pair $X,Y$ of elements has a meet $X\wedge Y$ and a join $X\vee Y$. (In particular, $L$ has a least element, denoted $\hat 0$, and a greatest element, denoted $\hat 1$.)
\item Every element of $L$ is a join of atoms of $L$.
\item The rank function is semimodular, i.e., for any $X$ and $Y$ in $L$, $\rank(X)+\rank(Y)\geq \rank (X\wedge Y)+\rank (X\vee Y)$.
\end{enumerate}
\end{defn}

\begin{defn} Let $E$ be a finite set. A rank $r$ \textbf{matroid} on $E$ is a collection
$M$ of subsets of $E$, viewed as a poset ordered by inclusion, such that 
\begin{itemize}
\item $M$ is a rank $r$ geometric lattice,
\item the meet of any two elements is their intersection, and
\item $E$ is the join of all the atoms of $L$.
\end{itemize}
 The elements of $M$ are called  \textbf{flats}. 
\end{defn}

The canonical example arises from a finite arrangement $\{H_e:e\in E\}$ of hyperplanes (in an arbitrary vector space). Define a \textbf{flat} of the arrangement to be $A\subseteq E$ such that, for every $e\in E-A$, $(\cap_{a\in A}H_a)\cap H_e\neq \cap_{a\in A}H_a$. Then the set of flats of the arrangement is a matroid on $E$.

We note for future reference some standard facts about matroids:
\begin{nota} If $L$ is a lattice and $X, Y\in L$ then 
\begin{enumerate}
\item $\coat(X)$ denotes the set of coatoms of $L$ which are greater than or equal to $X$.
\item $L_{\geq X}$ denotes $\{Y\in L: Y\geq X\}$, and $L_{\leq X}$ denotes $\{Y\in L: Y\leq X\}$.
\item $\coat_{L_{\geq Y}}(X)$ denotes the set of coatoms of $L_{\geq Y}$ which are greater than or equal to $X$.
\end{enumerate}
\end{nota}

\begin{lemma}\label{lem:coat} (cf. Proposition 3.4.2 in~\cite{faigle}) 1. Every interval in a geometric lattice is a geometric lattice.

2.  If $L$ is a geometric lattice and $\hat 1\neq X\in L$ then $X=\wedge\coat(X)$.
\end{lemma}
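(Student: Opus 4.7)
For part (1), the interval $I = [X, Y]$ inherits the lattice structure of $L$, since $X \leq A \wedge B$ and $A \vee B \leq Y$ whenever $A, B \in I$, so meets and joins computed in $L$ stay in $I$. The shifted rank function $\rho(Z) = \rank(Z) - \rank(X)$ inherits semimodularity directly from $L$. The nontrivial property is atomicity. Given $Z \in I$, I would write $Z = \bigvee_i a_i$ with each $a_i$ an atom of $L$; then $Z = Z \vee X = \bigvee_i (a_i \vee X)$, and semimodularity gives $\rank(a_i \vee X) \leq \rank(X) + 1$, so each $a_i \vee X$ is either $X$ or covers $X$ in $L$. Since $a_i \vee X \leq Z \leq Y$, the nontrivial terms are atoms of $I$, and they have $Z$ as their join.

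For part (2), I would apply part (1) to $[X, \hat 1]$, which is a geometric lattice with bottom $X$ and whose coatoms coincide exactly with $\coat(X)$. Suppose for contradiction that $Y := \wedge \coat(X) > X$. By atomicity of $[X, \hat 1]$, some atom $a$ of this interval (an element covering $X$ in $L$) satisfies $a \leq Y$, and hence $a \leq c$ for every $c \in \coat(X)$. The goal is to contradict this by producing a single $c \in \coat(X)$ with $a \not\leq c$.

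To construct such a $c$, choose $c$ maximal in $[X, \hat 1]$ subject to $a \not\leq c$; this set is nonempty (containing $X$) and excludes $\hat 1$, so a maximal element exists with $c < \hat 1$. The step I expect to be the main obstacle is verifying that this $c$ is in fact a coatom of $[X, \hat 1]$. If it were not, part (1) would make $[c, \hat 1]$ a geometric lattice of rank at least $2$, hence with at least two distinct atoms; but maximality of $c$ forces every element of $[X, \hat 1]$ strictly above $c$ to lie above $a$, so every atom of $[c, \hat 1]$ lies above $a$. Since semimodularity gives that $a \vee c$ covers $c$, any such atom must equal $a \vee c$, giving a unique atom of $[c, \hat 1]$ and the desired contradiction.
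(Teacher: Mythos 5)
The paper does not prove this lemma at all---it is stated with a citation to the literature (Proposition 3.4.2 of the reference \cite{faigle})---so there is no in-paper argument to compare against; what matters is whether your self-contained proof is sound, and it is. For (1), the sublattice, shifted-rank, and atomicity arguments are all correct; the key step $Z=\bigvee_i(a_i\vee X)$ with $\rank(a_i\vee X)\leq\rank(X)+1$ by semimodularity is exactly the standard way to see that upper intervals are atomic. For (2), your reduction to showing that any element $c$ maximal with respect to $a\not\leq c$ is a coatom is the standard proof that geometric lattices are coatomic, and the verification goes through: since $a$ covers $X$ and $a\not\leq c$ you get $a\wedge c=X$, so semimodularity forces $a\vee c$ to cover $c$, and maximality of $c$ forces every atom of $[c,\hat 1]$ to equal $a\vee c$, contradicting that a geometric lattice of rank at least $2$ (being a join of its atoms) must have at least two atoms. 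The only points you leave implicit---that $a\wedge c=X$, and that $\coat(X)\neq\emptyset$ when $X\neq\hat 1$---are immediate, so the argument is complete.
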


\begin{defn} A \textbf{complete flag}  in a matroid $L$ is a maximal chain in $L$.
\end{defn}

\begin{lemma} \label{lem:flag} (cf. Theorem 3.3.2 in~\cite{faigle}) Let $L$ be a geometric lattice.  If $\Ff$ is a maximal chain in $L$ and $X\in L$ then
$\{X\vee F| F\in\Ff\}$ is a maximal chain in $L_{\geq X}$ and $\{X\wedge F| F\in\Ff\}$ is a maximal chain in $L_{\leq X}$ .
\end{lemma}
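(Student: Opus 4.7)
The plan is to exploit semimodularity to control the rank of each image under $X\vee(-)$ and $X\wedge(-)$, and then to argue by counting. Write $\Ff=\{\hat 0=F_0\lessdot F_1\lessdot\cdots\lessdot F_r=\hat 1\}$, where $\rank(F_i)=i$. Then $X\vee F_0=X$, $X\vee F_r=\hat 1$, $X\wedge F_0=\hat 0$, and $X\wedge F_r=X$, so in both cases the endpoints are correct.

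For the join statement, I would first show $\rank(X\vee F_i)-\rank(X\vee F_{i-1})\in\{0,1\}$. Apply semimodularity to $X\vee F_{i-1}$ and $F_i$:
\[
\rank(X\vee F_{i-1})+\rank(F_i)\geq \rank\bigl((X\vee F_{i-1})\wedge F_i\bigr)+\rank(X\vee F_i).
\]
Since $(X\vee F_{i-1})\wedge F_i\geq F_{i-1}$, the right-hand side is at least $(i-1)+\rank(X\vee F_i)$, forcing $\rank(X\vee F_i)\leq \rank(X\vee F_{i-1})+1$. Thus consecutive terms of the sequence $X=X\vee F_0\leq X\vee F_1\leq\cdots\leq X\vee F_r=\hat 1$ either coincide or form a cover. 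Total rank increase is $r-\rank(X)$ over $r$ steps, each contributing $0$ or $1$, so exactly $r-\rank(X)$ steps are covers. Deleting repetitions yields a chain of length $r-\rank(X)$ from $X$ to $\hat 1$, which by Lemma~\ref{lem:coat}(1) is maximal in the rank-$(r-\rank(X))$ geometric lattice $L_{\geq X}$.

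For the meet statement, I would do the dual bookkeeping, but semimodularity still supplies the needed estimate. Apply semimodularity to $X\wedge F_i$ and $F_{i-1}$:
\[
\rank(X\wedge F_i)+\rank(F_{i-1})\geq\rank\bigl((X\wedge F_i)\wedge F_{i-1}\bigr)+\rank\bigl((X\wedge F_i)\vee F_{i-1}\bigr).
\]
The first meet on the right equals $X\wedge F_{i-1}$, and the join lies in the interval $[F_{i-1},F_i]$, which has only two elements because $F_{i-1}\lessdot F_i$. Hence $\rank((X\wedge F_i)\vee F_{i-1})-\rank(F_{i-1})\in\{0,1\}$, giving $\rank(X\wedge F_i)-\rank(X\wedge F_{i-1})\in\{0,1\}$. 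The same counting as before, now with total rank increase $\rank(X)$ over $r$ steps from $\hat 0$ to $X$, shows that after deleting duplicates we obtain a maximal chain in $L_{\leq X}$.

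The main content is the pair of semimodularity estimates; the only step requiring any care is the meet case, where one has to feed the \emph{larger} lattice element $X\wedge F_i$ into the semimodular inequality together with $F_{i-1}$, rather than the more obvious pairing $X$ with $F_i$, so that the covering relation $F_{i-1}\lessdot F_i$ is what bounds the rank jump.
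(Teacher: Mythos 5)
The paper itself gives no proof of this lemma (it is quoted from Faigle), so the only question is whether your argument stands alone. The join half does: applying semimodularity to $X\vee F_{i-1}$ and $F_i$, noting $(X\vee F_{i-1})\vee F_i=X\vee F_i$ and $(X\vee F_{i-1})\wedge F_i\geq F_{i-1}$, correctly forces $\rank(X\vee F_i)\leq\rank(X\vee F_{i-1})+1$, and the counting argument that exactly $r-\rank(X)$ of the steps are covers is sound. This is the standard proof.

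The meet half has a genuine gap, at exactly the step you flagged as the delicate one. Semimodularity applied to $X\wedge F_i$ and $F_{i-1}$ rearranges to
\[
\rank(X\wedge F_i)-\rank(X\wedge F_{i-1})\;\geq\;\rank\bigl((X\wedge F_i)\vee F_{i-1}\bigr)-\rank(F_{i-1}),
\]
a \emph{lower} bound on the rank jump, not the upper bound you need; the conclusion that the jump lies in $\{0,1\}$ does not follow. No repairing of the pairing can work, because the meet statement is actually false in non-modular geometric lattices: in the lattice of flats of $U_{3,4}$ take $\Ff=\{\emptyset<\{1\}<\{1,2\}<[4]\}$ and $X=\{3,4\}$; then $\{X\wedge F:F\in\Ff\}=\{\emptyset,\{3,4\}\}$, which skips rank $1$ in the rank-$2$ lattice $L_{\leq X}$ and so is not a maximal chain there. (Meets along a maximal chain yield a maximal chain in \emph{lower} semimodular lattices; geometric lattices are upper semimodular, which is why only the join half survives dualization.) Note that the proof of Theorem~\ref{thm:main} uses only the join half of the lemma, so the flaw lies in the quoted statement rather than in anything your argument could have rescued; your write-up should either drop the meet half or add a modularity hypothesis under which it holds.
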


\section{Homotopy arrangements} 
\begin{defn} \label{defn:hom rep} A \textbf{homotopy $d$-arrangement} is a simplicial complex $S$ and
a finite set $\Aa$ of subcomplexes of $S$ such that:
\begin{enumerate}
\item $S\simeq S^d$
\item Each $T\in\Aa$ is homotopy equivalent to $S^{d-1}$.
\item Every intersection of elements of $\Aa$ is homotopy equivalent to a sphere.
\item There is a free $\mathbb Z_2$ action on $S$ that restricts to an action on each $T\in\Aa$.
\item If $U\simeq S^{d'}$ is an intersection of elements of $\Aa$ and $T\in \Aa$ such that $U\not\subseteq T$ then $U\cap T\simeq S^{d'-1}$. 
\end{enumerate}
\end{defn}

\begin{defn} Let $(S, \Aa)$ be a homotopy $d$-arrangement, where $\Aa=\{T_e:e\in E\}$. A subset $F$ of $E$ is a \textbf{flat} if, for every $e\in E-F$,  $(\cap_{f\in F}T_f)\bigcap T_e\neq \cap_{f\in F}T_f$.
\end{defn}

\begin{prop}\label{prop:geom latt}  If $(S, \{T_e:e\in E\})$ is a homotopy $d$-arrangement then the set of all its flats  is a matroid. 
\end{prop}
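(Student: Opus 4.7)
My plan is to define a closure operator on subsets of $E$ whose closed sets are precisely the flats, and then verify each axiom of a geometric lattice using the topological conditions in Definition~\ref{defn:hom rep}. For $A\subseteq E$ I set $I_A:=\bigcap_{e\in A}T_e$ (with $I_\emptyset=S$) and $\mathrm{cl}(A):=\{e\in E:T_e\supseteq I_A\}$; this equals $\{e:I_A\cap T_e=I_A\}$, and $F$ is a flat iff $\mathrm{cl}(F)=F$. Elementary set-theoretic checks give $A\subseteq \mathrm{cl}(A)$, monotonicity, and $I_{\mathrm{cl}(A)}=I_A$, hence idempotence. A standard closure-operator argument then produces a lattice of flats with $F_1\wedge F_2=F_1\cap F_2$ (still closed because $\mathrm{cl}(F_1\cap F_2)\subseteq \mathrm{cl}(F_1)\cap \mathrm{cl}(F_2)=F_1\cap F_2$) and $F_1\vee F_2=\mathrm{cl}(F_1\cup F_2)$, with bottom element $\mathrm{cl}(\emptyset)=\emptyset$ (conditions~1 and~2 forbid $T_e=S$, since $T_e\simeq S^{d-1}$ and $S\simeq S^d$ have different homotopy types) and top element $E$.

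I next introduce rank by $\rank(F):=d-k$ where $I_F\simeq S^k$, well-defined by condition~3. The key topological input is condition~5: if $F$ is a flat and $e\notin F$, then $I_F\not\subseteq T_e$, so $I_{F\cup\{e\}}=I_F\cap T_e\simeq S^{k-1}$, and hence $\rank(\mathrm{cl}(F\cup\{e\}))=\rank(F)+1$. From this I conclude that any flat strictly containing $F$ has strictly larger rank, and that the covers of $F$ in the lattice are precisely the flats $\mathrm{cl}(F\cup\{e\})$ with $e\notin F$.

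Upper semimodularity then follows from the standard matroid-lattice argument: if $F$ covers $F\wedge G$, pick $e\in F\setminus G$; the flat $\mathrm{cl}((F\wedge G)\cup\{e\})$ is a cover of $F\wedge G$ contained in $F$, so equals $F$, giving $F\vee G=\mathrm{cl}(G\cup\{e\})$, which covers $G$ because $e\notin G=\mathrm{cl}(G)$. This delivers the semimodular rank inequality. Every flat $F$ satisfies $F=\bigvee_{e\in F}\mathrm{cl}(\{e\})$, so every flat (including $E$) is a join of atoms, and together with the already-established identity $F_1\wedge F_2=F_1\cap F_2$ this completes the verification of the three matroid axioms.

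The hard part will be extracting from condition~5 that the rank drops by \emph{exactly} one (not merely by at least one) when a fresh $T_e$ is intersected into $I_F$; once that unit drop is in hand, the closure-operator formalism carries upper semimodularity and atom generation almost mechanically. Condition~4 (the $\mathbb{Z}_2$ action) plays no role in this proposition and is presumably there for the oriented refinement later in the paper.
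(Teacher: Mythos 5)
Your proof is correct, and it rests on the same essential topological input as the paper's --- namely that Property 5 of Definition 2.1 forces the sphere dimension of $\bigcap_{f\in F}T_f$ to drop by exactly one each time a genuinely new $T_e$ is intersected in --- but the route from that fact to semimodularity is genuinely different. The paper dispenses with the lattice bookkeeping in one sentence and proves the rank inequality directly: it chooses a sequence $e_1,\dots,e_k$ in $F\setminus G$ each member of which properly cuts down the running intersection, so that $\bigcap_{e\in F\cap G}T_e\simeq S^{f+k}$, and then observes that intersecting $\bigcap_{e\in G}T_e$ with the same $k$ subcomplexes can lower the dimension by at most $k$; the inequality $\rank(F\wedge G)+\rank(F\vee G)\le\rank(F)+\rank(G)$ falls out by arithmetic. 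You instead set up the closure operator $\mathrm{cl}$, identify the covers of a flat $F$ as the sets $\mathrm{cl}(F\cup\{e\})$, verify the cover form of semimodularity ($F$ covers $F\wedge G$ implies $F\vee G$ covers $G$), and then invoke the standard equivalence between that condition and the rank inequality in a graded lattice. Your version is more systematic and fills in the parts the paper waves at (``follows easily from induction,'' ``the only other nontrivial thing to check''), including meet-equals-intersection, the absence of loops, and atom generation; the cost is an appeal to the classical semimodular-lattice theorem, which the paper's direct dimension count avoids. Two small points you should make explicit: the claim that any flat strictly containing $F$ has strictly larger rank needs the iterated form of Property 5 (adjoin the elements of the larger flat one at a time; each step drops the dimension by $0$ or $1$, and the first step drops it because $F$ is a flat), and the case $F=\emptyset$ uses Properties 1 and 2 in place of Property 5, since there $I_F=S$ is the empty intersection. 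Neither is a gap.
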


We say that $(S,\{T_e:e\in E\})$ is a \textbf{homotopy representation} of this matroid. 

The proof of Proposition~\ref{prop:geom latt} is strainghtforward. The more interesting result is the converse, Corollary~\ref{bigcor}:  every matroid has a homotopy representation.

\begin{proof} That the poset of flats is ranked follows easily from induction and Property 5 in Definition~\ref{defn:hom rep}. The only other nontrivial thing to check is semimodularity. Let $F$ and $G$ be flats with $\cap_{e\in F}T_e\simeq S^f$ and $\cap_{e\in G}T_e\simeq S^g$. Let $\{T_{e_1}, \ldots, T_{e_k}\}$ be a subset of $F-G$ such that, for every $j\in[k]$, $\bigcap_{e\in F\cap G} T_e\cap\bigcap_{i=1}^{j-1} T_{e_i}\not\subseteq T_{e_j}$ and 
$\bigcap_{e\in F\cap G} T_e\cap\bigcap_{i=1}^{k} T_{e_i}=\cap_{e\in F} T_e$. Then, by Property 5 in Definition~\ref{defn:hom rep},
$$\cap_{e\in F\wedge G}T_e=\cap_{e\in F\cap G}T_e\simeq S^{f+k}$$
and 
$$\cap_{e\in F\vee G}T_e=\cap_{e\in F\cup G}T_e=(\cap_{e\in G}T_e)\cap(\cap_{i=1}^k T_{e_i})\simeq S^h,$$
where $h\geq g-k$. (Here wedge and join are taken in the lattice of flats of $\Aa$.) Thus 
$$\begin{array}{rcl} 
\rank({F\wedge G})+\rank({F\vee G})&=&r-(f+k)+r-h\\
&\leq&r-f+r-g\\
&=&\rank({F})+\rank({G}).
\end{array}$$
\end{proof}

\section{The construction}

Throughout the following $L$ is a geometric lattice of rank $r$.  We will construct a simplicial complex with two vertices $G_+, G_-$ for each coatom $G$ of $L$.

Fix a complete flag ${\Ff}=\{\hat{0} =F_0<\cdots<F_r=\hat{1}\}$ in $L$. For every $i$, let
$A_i=\coat(F_i)\backslash\coat(F_{i+1})$.

Lemma~\ref{lem:coat} tells us that each $A_i$ is nonempty. Note that $\{A_0,\ldots,A_{r-1}\}$ is a partition of the coatoms of $L$.

Define $S_{\hat 0}$ to be the simplicial complex with $2^r$ maximal simplices
$$V_0\cup\cdots \cup V_{r-1}$$
where each $V_i\in \{\{G_+:G\in  A_i\}, \{G_-:G\in A_i\}\}$.

More generally, for each $G\in L$, let $S_G$ be the simplicial complex with maximal simplices 
$$W_0(G)\cup\cdots\cup W_{r-1}(G)$$
where each $W_i(G)\in \{ \{H_+:H\in \coat(G)\cap A_i\},  \{H_-:H\in \coat(G)\cap A_i\}\}$. For every such maximal simplex $\sigma$ and every $i$, call $W_i(G)$ the \textbf{$i$th part of $\sigma$}. We describe $\sigma$ by a vector $\sign(\sigma)\in \{+,-,0\}^{[0, r-1]}$ by defining 
$$(\sign(\sigma))_i=\left\{\begin{array}{cc}
0 &\mbox{ if $W_i(G)=\emptyset$}\\
+ &\mbox{ if $W_i(G)=\{H_+:H\in \coat(G)\cap A_i\}\neq\emptyset$}\\
- &\mbox{ if $W_i(G)=\{H_-:H\in \coat(G)\cap A_i\}\neq\emptyset$}
\end{array}\right.$$

Notice that $S_{\hat 1}=\emptyset$. Also, for every $G<H$ in $L$, $S_G\supset S_H$. In particular, every $S_G$ is a subcomplex of $S_{\hat 0}$.

When we wish to emphasize the dependency on $\Ff$, we will denote $A_i$ by $A_i(\Ff)$ and $S_G$ by $S_G(\Ff)$.

\noindent{\bf Example.} For the rank 2 uniform matroid on $E=[4]$, let ${\Ff}=\{\hat 0<\{1\}<[4]\}$. Then $S_{\hat 0}({\Ff})$ consists of four tetrahedra and their faces, as shown in Figure~\ref{fig:rank2}. For each rank 1 flat $\{x\}$, $S_{\{x\}}$ consists of the two vertices $\{x\}_+, \{x\}_-$.
\begin{figure}
\input{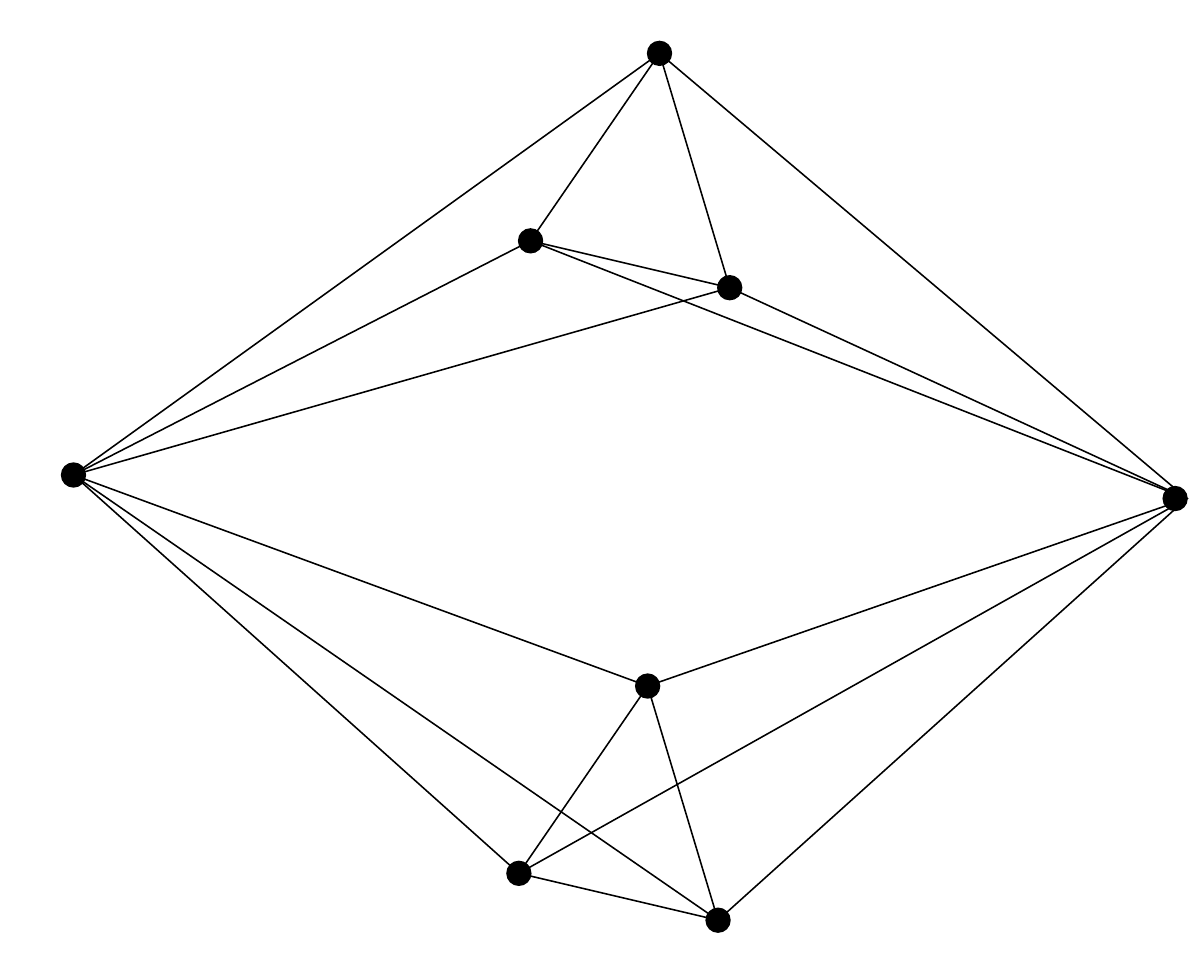_t}
\caption{The homotopy sphere $S_{\hat 0}$for the rank 2 uniform matroid on 4 elements\label{fig:rank2} }
\end{figure}

\begin{thm}\label{thm:main}
\begin{enumerate}
\item $(S_{\hat 0}, \{S_G:G\in\atom(L)\})$ is a homotopy $(r-1)$-arrangement.
\item For every $G\in L$, $S_G\simeq S^{\corank(G)-1}$
\item For every $G, H\in L$, $S_G\cap S_H=S_{G\vee H}$
\end{enumerate}
\end{thm}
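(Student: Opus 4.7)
I would prove the three parts in the order 3, 2, 1, since parts 2 and 3 feed directly into the verification of part 1.

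For part 3, the only real content is the lattice-theoretic fact $\coat(G)\cap\coat(H)=\coat(G\vee H)$, which is immediate: a coatom $K$ lies above both $G$ and $H$ iff it lies above $G\vee H$. Unwinding the construction, a simplex $\sigma\subseteq S_{\hat 0}$ belongs to $S_G$ iff (a) every vertex $K_\epsilon$ of $\sigma$ satisfies $K\in\coat(G)$, and (b) within each block $A_i$ all vertices of $\sigma$ carry the same sign. Condition (b) does not depend on $G$, so $\sigma\in S_G\cap S_H$ iff $K\in\coat(G)\cap\coat(H)=\coat(G\vee H)$ for every vertex, i.e.\ iff $\sigma\in S_{G\vee H}$.

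For part 2, I would describe $S_G$ as an iterated join. Let $B_i=\coat(G)\cap A_i$ and let $I(G)=\{i:B_i\neq\emptyset\}$. For $i\in I(G)$ let $T_i$ be the simplicial complex whose two maximal simplices are $\{K_+:K\in B_i\}$ and $\{K_-:K\in B_i\}$ (two disjoint simplices, hence $T_i\simeq S^0$). Directly from the definition of $S_G$ via $W_i(G)$, the maximal simplices of $S_G$ are exactly the joins of one maximal simplex from each $T_i$, so $S_G=\ast_{i\in I(G)}T_i\simeq S^{|I(G)|-1}$. It remains to identify $|I(G)|$ with $\corank(G)$. Using $\coat(G)\cap\coat(F_i)=\coat(G\vee F_i)$, the block $B_i$ is empty iff $\coat(G\vee F_i)=\coat(G\vee F_{i+1})$, which by Lemma~\ref{lem:coat}(2) (applied in $L$ when $G\vee F_i\neq\hat 1$, trivially when $G\vee F_i=\hat 1$) is equivalent to $G\vee F_i=G\vee F_{i+1}$. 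Thus $|I(G)|$ counts the strict ascents in the chain $\{G\vee F_i\}_{i=0}^r$, which is a maximal chain in $L_{\geq G}$ by Lemma~\ref{lem:flag}; this chain has exactly $\corank(G)$ strict ascents.

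For part 1, I would verify the five axioms of Definition~\ref{defn:hom rep} using parts 2 and 3. Axiom (1): $S_{\hat 0}\simeq S^{\corank(\hat 0)-1}=S^{r-1}$. Axiom (2): for $a\in\atom(L)$, $S_a\simeq S^{\corank(a)-1}=S^{r-2}$. Axiom (3): any intersection $\bigcap_{a\in \Gg}S_a$ equals $S_{\vee\Gg}$ by iterating part 3, and by part 2 this is a (possibly empty) homotopy sphere. Axiom (4): the involution $K_+\leftrightarrow K_-$ clearly preserves each $S_G$; freeness follows because every simplex has, within each $A_i$, either all $+$ or all $-$ signs (never both), so no nonempty simplex is fixed setwise. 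Axiom (5) is the main bookkeeping step: if $U=S_J$ for $J=\vee\Gg$ and $T=S_a$ with $U\not\subseteq T$, I first argue $S_J\subseteq S_a\Leftrightarrow J\geq a$ (the forward direction uses $\coat(J)\subseteq\coat(a)$ and Lemma~\ref{lem:coat}(2); the reverse is direct). Hence $U\not\subseteq T$ forces $a\not\leq J$, so $J\vee a$ covers $J$, giving $\corank(J\vee a)=\corank(J)-1$, and $U\cap T=S_{J\vee a}\simeq S^{d'-1}$.

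The main obstacle is the dimension count in part 2: correctly recognizing $S_G$ as a join of $S^0$'s and matching the number of nonempty blocks $B_i$ with $\corank(G)$ via Lemma~\ref{lem:flag} and Lemma~\ref{lem:coat}(2). Once that is done, parts 1 and 3 are largely formal.
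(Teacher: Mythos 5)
Your proposal is correct, and while it rests on the same combinatorial facts as the paper's proof (the identity $\coat(G)\cap A_i=\coat(G\vee F_i)\backslash\coat(G\vee F_{i+1})$, the count of strict ascents via Lemma~\ref{lem:flag}, and semimodularity for axiom 5 of Definition~\ref{defn:hom rep}), the topological core is handled by a genuinely different tool. The paper identifies the homotopy type of $S_G$ by showing that the nerve of its set of maximal simplices is isomorphic to the nerve of the maximal proper faces of a $\corank(G)$-dimensional cross-polytope, and then invokes Lemma~\ref{lem:intlattice}; you instead exhibit $S_G$ literally as the simplicial join $\ast_{i\in I(G)}T_i$ of complexes $T_i\simeq S^0$ and use the fact that join preserves homotopy equivalence, so $S_G\simeq S^{|I(G)|-1}$. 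These are two faces of the same geometry (the boundary of the cross-polytope \emph{is} the join of $r$ copies of $S^0$), but your version avoids the Nerve Theorem entirely and makes the sphere structure of $S_G$ transparent. Similarly, for part 3 the paper works through an analysis of which intersections of maximal simplices of $S_G$ and $S_H$ are maximal in $S_G\cap S_H$, whereas you characterize membership of an \emph{arbitrary} simplex in $S_G$ (vertices lie over $\coat(G)$, signs constant on each block $A_i$) and observe that only the first condition depends on $G$; this reduces part 3 to the one-line lattice identity $\coat(G)\cap\coat(H)=\coat(G\vee H)$ and is cleaner than the paper's argument. Your part 1 follows the paper essentially verbatim (including the semimodularity computation for axiom 5), and your explicit verification that no nonempty simplex is setwise fixed by the involution is a small addition the paper leaves implicit.
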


\begin{cor} \label{bigcor} For every matroid $M$ and complete flag $\Ff$ in $M$, $(S_{\hat{0}}(\Ff), \{S_G: G\mbox{ a minimal non-$\hat{0}$ flat of }M\})$ is a homotopy representation of $M$.
\end{cor}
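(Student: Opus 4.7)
The plan is to read off the corollary from Theorem~\ref{thm:main} together with Proposition~\ref{prop:geom latt}. Since the atoms of $L$ are by definition the minimal non-$\hat 0$ flats of $M$, part 1 of Theorem~\ref{thm:main} already asserts that $(S_{\hat 0}(\Ff), \{S_G : G\in\atom(L)\})$ is a homotopy $(r-1)$-arrangement. By Proposition~\ref{prop:geom latt} its flats form a matroid, so the only substantive thing left to verify is that this matroid coincides with $M$.

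To identify the two matroids, I would argue that the map $F\mapsto \bigvee F$ (join in $L$ of the atoms in $F$) is an order-isomorphism from the lattice of flats of the homotopy arrangement to $L$. Iterating part 3 of Theorem~\ref{thm:main} gives
\[
\bigcap_{e\in F} S_e \;=\; S_{\bigvee F}
\]
for every $F\subseteq\atom(L)$. Hence $F$ is a flat of the homotopy arrangement iff, for every atom $e\notin F$, $S_{\bigvee F\vee e}\neq S_{\bigvee F}$. I would then observe that the correspondence $G\mapsto S_G$ is injective on $L$: for $G\neq \hat 1$, Lemma~\ref{lem:coat} recovers $G$ as the meet of those coatoms $H$ for which $H_+$ appears as a vertex of $S_G$, while part 2 of Theorem~\ref{thm:main} detects $\hat 1$ as the unique element whose complex is empty. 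Thus the inequality above is equivalent to $\bigvee F\vee e\neq\bigvee F$, i.e., $e\not\leq\bigvee F$, so the flats of the homotopy arrangement are precisely the atom-sets $\{e\in\atom(L): e\leq X\}$ ranging over $X\in L$.

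Because $L$ is a geometric lattice, every $X\in L$ is determined by the atoms below it, so the correspondence $X\mapsto\{e\in\atom(L):e\leq X\}$ is an order-preserving bijection in both directions; it also preserves ranks, since part 2 of Theorem~\ref{thm:main} identifies $\cap_{e\in F}S_e$ as a homotopy $(\corank(\bigvee F)-1)$-sphere, matching the rank function extracted in the proof of Proposition~\ref{prop:geom latt}. The argument is therefore pure bookkeeping once Theorem~\ref{thm:main} is in hand; the only step that even momentarily demands thought is the injectivity of $G\mapsto S_G$, and that follows immediately from Lemma~\ref{lem:coat} and the explicit combinatorial description of $S_G$.
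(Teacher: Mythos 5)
The paper offers no explicit proof of this corollary, treating it as an immediate consequence of Theorem~\ref{thm:main} and Proposition~\ref{prop:geom latt}; your argument is exactly the intended derivation, and the details you supply (iterating part 3 to get $\cap_{e\in F}S_e=S_{\vee F}$, injectivity of $G\mapsto S_G$ via Lemma~\ref{lem:coat} and the vertex set of $S_G$, and the resulting identification of the arrangement's flat lattice with $L$) are correct.
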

 
Before we begin the proof, some notation is in order. For every $G\in L$, let $c(G)=\{i\in[0,r-1]:\coat(G)\cap A_i\neq\emptyset\}$. Thus $c(G)$ is exactly the set of all $i$ for which $(\sign(\sigma))_i\neq \vec{0}$ for maximal simplices $\sigma$ of $S_G$.

We will view $\{+,-,0\}$ as a poset with maximal elements $+$ and $-$ and minimal element $0$, and we will order $\{+,-,0\}^{[0, r-1]}$ componentwise.

For every $\vec{v}\in \{+,-,0\}^{[0, r-1]}$ and every $G\in L$, let $\sigma(\vec{v}, G)=V_0\cup\cdots\cup V_{r-1}$, where 
$$V_i= \left\{\begin{array}{ll}
\{H_+:H\in\coat(G)\cap A_i\}&\mbox{ if $v_i=+$}\\
\{H_-:H\in\coat(G)\cap A_i\}&\mbox{ if $v_i=-$}\\
\emptyset&\mbox{ if $v_i=0$}
\end{array}\right.$$

For any $\vec{v}\in\{+,-,0\}^{[0, r-1]}-\{\vec{0}\}$, let $\vec{v}_*$ denote the vector obtained from $\vec{v}$ by deleting all 0 components.

Also before we begin the proof, we briefly review the \textbf{cross-polytope}. For our purposes it is convenient to view the cross-polytope in ${\mathbb R}^{[0, r-1]}$. It is defined to be the convex hull
${\mathrm{conv}}(\{\vec{e_i}:i\in[0,r-1]\} \cup\{-\vec{e_i}:i\in[0,r-1]\})$ of the coordinate vectors  and their negatives. Thus it is an $r$-dimensional simplicial convex polytope whose poset of proper faces is isomorphic to $\{+,-,0\}^{[0, r-1]}$. 

The  gist of the proof is that, for every element $G$ of $L$, there is a canonical isomorphism from the nerve of the set of maximal simplices of $S_G$ to the nerve of the set of maximal proper faces of the cross-polytope of dimension $\corank(G)$. Thus, by Lemma~\ref{lem:intlattice}, $S_G\simeq S^{\corank(G)-1}$.

\begin{numpf}{Proof of Theorem \ref{thm:main}}
Proof of (2): We  first show that there are exactly $\corank(G)$ elements $i$ of $[0, r-1]$ such that $\coat(G)\cap A_i\neq\emptyset$. Notice that, for any $i$, 
$$\begin{array}{rcl}
\coat(G)\cap A_i&=&\coat(G)\cap (\coat(F_i)\backslash\coat(F_{i+1}))\\
&=&\coat_{M_{\geq G}}(F_i)\backslash \coat_{M_{\geq G}}(F_{i+1})\\
&=&\coat(G\vee F_i)\backslash\coat(G\vee F_{i+1}).\end{array}$$
 Thus $\coat(G)\cap A_i\neq\emptyset$ if and only if $G\vee F_i<G\vee F_{i+1}$. By Lemma~\ref{lem:flag}, there are exactly $\rank(M_{\geq G})=\corank(G)$ values $i$ such that $G\vee F_i<G\vee F_{i+1}$.

Thus the maximal simplices of $S_G$ are in bijection with the maximal elements of $\{+,-,0\}^{[0,\corank(G)-1]}$ by the bijection $\sigma\leftrightarrow \vec{v}(\sigma)_*$. In fact, we'll show, for every set $\sigma_1,\ldots, \sigma_n$ of maximal simplices of $S_G$,
$$\cap_{i=1}^n\sigma_i=\sigma(\bigwedge_{i=1}^n\vec{v}(\sigma_i), G).$$

This is easily seen by noting that each side of the equation has the same $j$th part:
$$\sign{(\cap_{i=1}^n\sigma_i)}_j=
\left\{\begin{array}{ll}
+&\mbox{ if $v_j(\sigma_i)=+$ for all $i$}\\
-&\mbox{ if $v_j(\sigma_i)=-$ for all $i$}\\
0&\mbox{ otherwise}
\end{array}\right\}=
\sign(\sigma(\bigwedge_{i=1}^n \vec{v}(\sigma_i), G))_j.$$

Notice that $\cap_{i=1}^n\sigma_i$ is empty if and only if $\bigwedge_{i=1}^n\vec{v}(\sigma_i)=\vec{0}$, and two such intersections $\cap_{i=1}^n\sigma_i$ and $\cap_{i=1}^n\tau_i$ coincide if and only if $\bigwedge_{i=1}^n\vec{v}(\sigma_i)=\bigwedge_{i=1}^n\vec{v}(\tau_i)$. Thus the set of maximal simplices of $S_G$ has nerve isomorphic to that of the set of maximal proper faces of a $\corank(G)$-dimensional cross-polytope. By Lemma~\ref{lem:intlattice} we conclude that $S_G\simeq S^{\corank(G)-1}$.

Proof of (3): The maximal simplices of $S_G\cap S_H$ are exactly the simplices of the form $\alpha\cap\beta$, where $\alpha$ is a maximal simplex of $S_G$, $\beta$ is a maximal simplex of $H$, and $\{\vec{\alpha}_i, \vec{\beta}i\}\neq\{+,-\}$ for every $i$. In other words, the maximal simplices of $S_G\cap S_H$ are exactly the simplices of the form $\sigma(\vec{v_\alpha}, G)\cap \sigma(\vec{v_\beta}, H)$ with $(\vec{v_\alpha})_i=(\vec{v_\beta})_i$ for each $i\in c(G)\cap c(H)$.
We will show these are exactly the maximal simplices of $S_{G\vee H}$.

Note that $\coat(G\vee H)=\coat(G)\cap \coat(H)$, and so $c(G\vee H)=c(G)\cap c(H)$. So the maximal simplices of $S_{G\vee H}$ are all $\sigma(\vec{v}, G\vee H)$ where $\vec{v}$ has non-0 components $c(G)\cap c(H)$. If $\vec{v}$ is such a vector and $\vec{v'}>\vec{v}$ then $\sigma(\vec{v}, G\vee H)=\sigma(\vec{v'}, G\vee H)$. In particular, for every such $v$ we can choose $\vec{v_\alpha}>\vec{v}$ with non-0 components $c(G)$ and $\vec{v_\beta}>\vec{v}$ with non-0 components $c(H)$, and then $\sigma(\vec{v}, G\vee H)=\sigma(\vec{v_\alpha}, G\vee H)\cap \sigma(\vec{v_\beta}, G\vee H)$. This is easily seen to be $\sigma(\vec{v_\alpha}, G)\cap \sigma(\vec{v_\beta}, H)$.

Proof of (1): $S_{\hat 0}\simeq S^{r-1}$ by Lemma~\ref{lem:intlattice} as in the proof of (2). The second and third parts of the definition are proved by (2) and (3). The $\mathbb Z_2$ action is simply to send each vertex $G_+$ to $G_-$ and vice-versa. 

To see the final part of the definition, note that by (2) an intersection of elements of $\{S_G:G\in\atom(L)\}$ is just $S_H$ for some $H\in L$. Let $G\in\atom(L)$ such that $S_H\not\subseteq S_G$. Then $H\not\geq G$, and $S_H\cap S_G=S_{H\vee G}\simeq S^{\corank(H\vee G)-1}$. By semimodularity $\rank(G\vee H)\leq\rank(G)+\rank(H)-\rank (G\wedge H)=1+\rank(H)-0$, and since $H\not\geq G$ we get  $\rank(G\vee H)=1+\rank(H)$.
 
\end{numpf}

\section{Oriented matroids}

The inspiration for this paper came from Swartz's similar result (\cite{Ed}), which in turn was inspired by Folkman and Lawrence's Topological Representation Theorem (\cite{FL}). Folkman and Lawrence's result concerns oriented matroids, which are combinatorial analogs to arrangements of oriented hyperplanes in ${\mathbb R}^n$ (or ${\mathbb F}^n$, where $\mathbb F$ is any field of characteristic 0). Their result (given precisely later in this section) says that any oriented matroid can be represented by an arrangement of oriented "topological equators" in $S^{n-1}$. Thus it states that, topologically, oriented matroids constitute a very good model for real hyperplane arrangements. Swartz's result (and the present Corollary~\ref{bigcor}) are more mysterious. For instance, why should a matroid arising from an arrangement of hyperplanes in ${\mathbb Z}_2^n$ have a representation by homotopy spheres?

The situation could be seen as less mysterious if our representations of matroids were wholly unrelated to the representations of Folkman and Lawrence. However, this section will prove that, for an orientable matroid $M$ with a choice of complete flag, the Folkman-Lawrence topological representation of any orientation of $M$ embeds in a topologically nice way in the homotopy representation of $M$. 

There is no similar result for Swartz's homotopy representations. For instance,  for rank 2 matroids, Swartz's representations are 1-dimensional homotopy circles, which precludes the existence of  embeddings with the properties we want. (As an aside, it should be noted that Swartz's construction has virtues that the construction of the present paper lacks -- most obviously, his homotopy representations of rank $r$ matroids are $(r-1)$-dimensional complexes.)

\subsection{Background} 

Like ordinary matroids, oriented matroids have several equivalent characterizations. An oriented matroid $M$ on a finite set $E$ can be given, for instance, by:
\begin{itemize}
\item Its set $V^*(M)$ of \textbf{covectors}. $V^*(M)$ is defined to be a subset of $\{+,0,-\}^E$ satisfying certain axioms. The axioms are somewhat lengthy and won't be used directly here. (They may be found, for instance, in~\cite{BLSWZ}.) All we need to know is the Topological Representation Theorem, given below.
\item Equivalently, its set $C^*(M)$ of \textbf{cocircuits}. $C^*(M)$ is the set of minimal nonzero elements of $V^*(M)$. (Here $V^*(M)$ is viewed as a  subposet of $\{+,0,-\}^E$.)
\end{itemize}

The motivating example: let $\Aa=\{\vec{v_e}^\perp:e\in E\}$ be a finite arrangement of hyperplanes in ${\mathbb R}^n$. $\Aa$ defines a function $s:{\mathbb R}^n\to\{+,-,0\}^E$ by $s(\vec{x})(e)=\sign(\vec{x}\cdot\vec{v_e})$. The image of $s$ is the set of covectors of an oriented matroid. Thus there is a bijection between covectors of the oriented matroid and cones in the partition of ${\mathbb R}^n$ determined by $\Aa$.

As the motivating example suggests, oriented matroids encode strictly more data than matroids. Specifically:
\begin{defn} If $M$ is an oriented matroid, then $L:=\{X^{-1}(0):X\in V^*(M)\}$ is a matroid, called the \textbf{underlying matroid} of $M$. The set of coatoms of $L$ is 
 $\{X^{-1}(0):X\in C^*(M)\}$. For each coatom $C$ of $L$ there are exactly two cocircuits $X, -X$ of $M$ such that $C=X^{-1}(0)$.  
 
 For every $G\in L$, the corresponding  \textbf{covector flat} of $M$ is $\{X\in V^*(M):X(G)=0\}$. Denote this set by $M_G$.
\end{defn}

 \begin{defn} A subset $S$ of $S^{r-1}$ is a \textbf{pseudosphere} if there is an automorphism of $S^{r-1}$ taking $S$ to an equator.
 \end{defn}
 
 Thus, if $S\subset S^{r-1}$ is a pseudosphere then $S^{r-1}\backslash S$ has two connected components.
  \begin{defn} An  \textbf{oriented pseudosphere} in $S^{r-1}$ is a triple $(S, S^+, S^-)$, where $S$ is a pseudosphere in $S^{r-1}$ and $S^+$, $S^-$ are the two components of $S^{r-1}\backslash S$.

 A rank $r$ \textbf{arrangement of oriented pseudospheres} in $S^{r-1}$ is a finite set $\{(S_e, S_e^+, S_e^-):e\in E\}$ of oriented pseudospheres in $S^{r-1}$ satisfying:
 \begin{enumerate}
 \item For all $A\subseteq E$, $S_A:=\cap_{e\in E}S_e$ is a topological sphere.
 \item If $A\subseteq E$ and $e\in E$ such that $S_A\not\subseteq S_e$ then $(S_e\cap S_A, S_e^+\cap S_A, S_e^-\cap S_A)$ is an oriented pseudosphere in $S_A$.
 \end{enumerate}
 \end{defn}
 
 Loosely put, an arrangement of oriented pseudospheres in $S^{r-1}$ is a finite set of oriented topological equators that behaves topologically like a finite set of oriented equators.
 
 An arrangement $\Aa=\{(S_e, S_e^+, S_e^-):e\in E\}$ of oriented pseudospheres in $S^{r-1}$ determines a function $\sign:S^{r-1}\to\{+,0,-\}^E$, where 
 $$\sign(x)(e)=\left\{\begin{array}{ll}
 0&\mbox{ if $x\in S_e$}\\
 +&\mbox{ if $x\in S_e^+$}\\
-&\mbox{ if $x\in S_e^-$}
\end{array}\right.$$
Let $V^*(\Aa)=\{\sign(X):x\in S^{r-1}\}\cup\{0\}$. Perhaps the most fundamental result in the theory of oriented matroids is:
 \begin{TRT}  (\cite{FL}) For any arrangement $\Aa$ of oriented pseudospheres in $S^{r-1}$, $V^*(\Aa)$ is the set of covectors of an oriented matroid.
 
 Conversely, for any rank $r$ oriented matroid $M$ on a set $E$, $V^*(M)=V^*(\Aa)$ for some  arrangement $\Aa$ of oriented pseudospheres in $S^{r-1}$.
\end{TRT}

\subsection{Topological representations and homotopy representations}

 In the following a simplicial complex is viewed as a poset, ordered by inclusion. For any poset $P$, we will denote its order complex by $\Delta P$.
 
\begin{prop}\label{prop:orientable} If $L$ is the lattice of flats of an orientable matroid and $M$ is an orientation of this matroid, then for each complete flag ${\Ff}=\{\hat{0}=F_0<F_1<\cdots<F_r=\hat{1}\} $ in $L$ and each choice of $e_1, \ldots, e_{r-1}$ with $e_i\in F_i\backslash F_{i-1}$ there is a canonical injection of posets $\iota: V^*(M)-\{0\}\to S_{\hat 0}$ taking each $M_G$ into $S_G$. Further, this injection induces a homotopy equivalence $\Delta V^*(M)\to \Delta S_{\hat 0}$ and restricts to a homotopy equivalence $\Delta M_G\to \Delta S_G$ for every $G$, and the $\mathbb Z_2$ action on $S_{\hat 0}$ restricts to the usual $\mathbb Z_2$ action $\iota(X)\to \iota(-X)$ on $\iota(V^*(M))$.
\end{prop}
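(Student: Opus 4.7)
My plan is to construct $\iota$ explicitly from canonical cocircuit representatives picked out by the data $(\Ff, e_1, \ldots, e_{r-1})$, verify the combinatorial properties in a routine manner, and then attack the homotopy equivalences via Quillen's Theorem A. The main difficulty will be showing that the Quillen fibers are contractible.

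For each coatom $H \in A_i$ with $i < r-1$, the element $e_{i+1} \notin H$ picks out a canonical positive cocircuit $Y_H^+ \in V^*(M)$ via $Y_H^+(e_{i+1}) = +$; for $H = F_{r-1}$, fix a sign convention using an auxiliary element outside $F_{r-1}$. Define
\[
\iota(X) \;=\; \{H_\epsilon : \epsilon\cdot Y_H^+ \le X \text{ in } V^*(M)\}, \qquad X \in V^*(M)-\{0\}.
\]
Thus the vertices of $\iota(X)$ correspond bijectively to the cocircuits of $M$ lying below $X$. The combinatorial verifications are then routine: $\iota(X)$ lies in $S_{Z(X)}$ (sign-consistency within each $A_i$-block follows because any cocircuit $Y\le X$ with $Y^{-1}(0)\in A_i$ satisfies $Y(e_{i+1})=X(e_{i+1})$, and all coatoms appearing lie in $\coat(Z(X))$); for $X \in M_G$, $Z(X)\ge G$ and hence $\iota(X)\in S_{Z(X)}\subseteq S_G$; $X\le Y$ implies cocircuits below $X$ form a subset of those below $Y$, giving order preservation; injectivity follows from the fact that a covector is the composition of its cocircuit faces; and $\iota(-X)=-\iota(X)$ is immediate.

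The plan for the homotopy equivalences is to apply Quillen's Theorem A: it suffices that for each simplex $\sigma\in S_{\hat 0}$ the comma object $\iota/\sigma=\{X\in V^*(M)-\{0\}:\iota(X)\subseteq\sigma\}$ have contractible order complex. Writing $C_\sigma=\{\epsilon_H Y_H^+:(H,\epsilon_H)\in\sigma\}$, the fiber $\iota/\sigma$ consists of covectors whose cocircuit-faces lie entirely in $C_\sigma$. Under the Folkman--Lawrence representation $\Sigma(M)\simeq S^{r-1}$, this identifies $\iota/\sigma$ with the face poset of the subcomplex $K_\sigma\subset\Sigma(M)$ whose $0$-skeleton is exactly $C_\sigma$. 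Sign-consistency in each $A_i$-block places $C_\sigma$ inside an intersection of closed hemispheres of $\Sigma(M)$, and the strategy is to conclude that $K_\sigma$ is a closed ball and hence contractible. Granting this, Quillen's Theorem A delivers $\Delta V^*(M)\to\Delta S_{\hat 0}$, and running the same argument inside $L_{\ge G}$ (with the flag data descended via Lemma~\ref{lem:flag}) gives $\Delta M_G\to\Delta S_G$; equivariance is preserved throughout.

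The step I expect to be most delicate is the contractibility of $K_\sigma$. Small rank-2 calculations already exhibit subtleties: for coatoms $H_1, H_2$ in the same $A_i$-block, the canonical positive cocircuits $Y_{H_1}^+, Y_{H_2}^+$ need not be conformally compatible, so $\iota/\sigma$ can fail to admit a cone structure and even be disconnected. I expect the argument to require induction on $|\sigma|$, adding coatoms one at a time and using the oriented-matroid composition and elimination axioms to maintain the ball structure of $K_\sigma$. The sign convention for $F_{r-1}$ must be fixed at the outset to be compatible with this hemisphere picture.
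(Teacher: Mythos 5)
Your construction of $\iota$ agrees with the paper's (a cocircuit $X$ with $X^{-1}(0)=H\in A_i$ goes to the vertex $H_{X(e_{i+1})}$, and a general covector to the union over its cocircuit faces; the paper's statement also really needs an $e_r\in E\setminus F_{r-1}$ for the last block, the same bookkeeping issue you flag), and the combinatorial and equivariance checks are fine. The homotopy-equivalence argument, however, has a fatal gap, and you have already put your finger on it without drawing the conclusion. Quillen's Theorem A requires \emph{every} fiber $\iota/\sigma=\{X:\iota(X)\subseteq\sigma\}$ to be contractible, and this is simply false. Take the rank $2$ uniform matroid on $[4]$ with $\Ff=\{\hat 0<\{1\}<[4]\}$, so $e_1=1$ and $A_0=\{\{2\},\{3\},\{4\}\}$, and choose a realization by four lines through the origin in $\mathbb R^2$ such that, along the half-circle $\{x:x\cdot v_1>0\}$, the three cocircuits vanishing on $2$, $4$, $3$ occur in that order. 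For the simplex $\sigma=\{2_+,3_+\}$ of $S_{\hat 0}$ (a face of the maximal simplex $\{2_+,3_+,4_+,1_+\}$), the fiber $\iota/\sigma$ consists of exactly the two cocircuits $Y_{\{2\}}^+$ and $Y_{\{3\}}^+$: every rank-$2$ covector has two cocircuit faces, and no arc has both endpoints in $C_\sigma$ because $Y_{\{4\}}^+$ lies between them. The fiber is a two-element antichain, hence disconnected and not contractible. No induction on $|\sigma|$ and no appeal to the composition or elimination axioms can repair this, because the hypothesis of the fiber lemma is genuinely violated; the full subcomplex of the Folkman--Lawrence sphere on an arbitrary vertex set $C_\sigma$ need not be a ball even when $C_\sigma$ lies in an intersection of closed hemispheres.

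The paper avoids this trap by not applying Quillen's Theorem A to $\iota$ itself. It instead covers $V^*(M_G)-\{0\}$ and $S_G$ by families $\{A_{\vec v}(G)\}$ and $\{B_{\vec v}(G)\}$ indexed by the same sign vectors, with matching nonempty-intersection patterns and $\iota(A_{\vec v}(G))\subseteq B_{\vec v}(G)$, and then invokes the nerve/carrier lemma (Lemma~\ref{carrier}). The pieces $A_{\vec v}(G)=\{X:X(e_{i_X})=v_{i_X}\}$ are much coarser than your fibers, and proving them contractible is the real content of the proof: a double induction on the ground set and the rank, using explicit order homotopies in the coordinate case and Quillen's Theorem A only for the single-element deletion step, where the relevant fibers have unique minimal elements. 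To salvage your outline you would need to replace the fibers $\iota/\sigma$ by a cover of this kind.
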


 The geometric idea of this injection is quite simple. We first describe it informally on cocircuits of $M$. Every antipodal pair $\{X,-X\}$ of cocircuits of $M$ corresponds to exactly one coatom $X^{-1}(0)$ of $L$, and hence to exactly two vertices $X^{-1}(0)_+, X^{-1}(0)_-$ of $S^{\hat 0}(\Ff)$. Our injection $\iota$
\begin{itemize}
\item will map a cocircuit $X$ in $e_1^+$ to $X^{-1}(0)_+$,
\item will map a cocircuit $X$ in $e_1^-$ to $X^{-1}(0)_-$,
\item will map a cocircuit $X$ in $e_1^0\cap e_2^+$ to $X^{-1}(0)_+$,
\item will map a cocircuit $X$ in $e_1^0\cap e_2^-$ to $X^{-1}(0)_-$,
\end{itemize}
and so forth. Notice that the first two cases describe $X$ with $X^{-1}(0)\in A_1$, the next  two cases describe $X$ with $X^{-1}(0)\in A_2$, and so forth.
Then we  define the image of any $X\in V^*(M)-\{0\}$ to be the join of the images of the cocircuits less than or equal to $X$. (We must prove that this join exists.)

So for each pseudohemisphere $P=e_1^0\cap e_2^0\cap\cdots\cap e_{k-1}^0\cap e_k^\epsilon$ with $\epsilon\in\{+,-\}$, $\iota$ sends the cocircuits in $P$ to the vertices of the simplex $\{\epsilon_c:c\in A_k\}$. Thus, it sends the "obvious" contractible subsets of $V^*(M)-\{0\}$ suggested by $\Ff$ to the "obvious" contractible subsets of $S_{\hat 0}$. This raises hope of a nerve argument showing $\iota$ to be a homotopy equivalence, which is indeed what we'll find.
   
\begin{lemma}\label {lem:basis} For any $i$, let $F_i$ and $e_i$ be as in Proposition~\ref{prop:orientable}. Then $\coat(F_i)=\{X^{-1}(0):X\in C^*(M)\mbox{ and }\{e_1, \ldots, e_i\}\subseteq X^{-1}(0)\}$.
\end{lemma}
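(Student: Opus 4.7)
The plan is to reduce the lemma to a straightforward statement about flats, using the fact (stated in the paper just before the lemma) that the coatoms of $L$ are exactly the sets $X^{-1}(0)$ as $X$ ranges over $C^*(M)$. Under this identification $F_i \leq X^{-1}(0)$ in $L$ is just set containment $F_i \subseteq X^{-1}(0)$, so the lemma becomes the equivalence
\[ F_i \subseteq X^{-1}(0) \iff \{e_1, \ldots, e_i\} \subseteq X^{-1}(0). \]

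For the forward direction I would simply note that $e_j \in F_j \setminus F_{j-1} \subseteq F_j \subseteq F_i$ whenever $j \leq i$, so every $e_j$ lies in $F_i$, and hence in $X^{-1}(0)$ if $F_i$ does.

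For the converse the key claim is that $F_i$ equals the closure (in the matroid) of $\{e_1, \ldots, e_i\}$. I would prove this by induction on $i$: the set $\{e_1, \ldots, e_j\}$ is contained in $F_j$, and the inclusion $F_{j-1} \subsetneq \mathrm{cl}(\{e_1, \ldots, e_j\}) \subseteq F_j$ together with the fact that $F_j$ covers $F_{j-1}$ (because $\Ff$ is a maximal chain) forces equality $\mathrm{cl}(\{e_1, \ldots, e_j\}) = F_j$. Once this is established, $\{e_1, \ldots, e_i\} \subseteq X^{-1}(0)$ and the fact that $X^{-1}(0)$ is itself a flat immediately give $F_i = \mathrm{cl}(\{e_1, \ldots, e_i\}) \subseteq X^{-1}(0)$.

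There is no serious obstacle here; the only minor point requiring care is the induction step verifying $F_i = \mathrm{cl}(\{e_1, \ldots, e_i\})$, which is really just the standard fact that picking one new element from each step of a maximal flag produces a basis for each $F_i$. The orientability of $M$ and the specific choice of cocircuit $X$ play no role beyond invoking the coatom/cocircuit correspondence, so the lemma is essentially a matroid-theoretic statement dressed in oriented-matroid notation.
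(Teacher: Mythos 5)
Your proof is correct and follows essentially the same route as the paper: the forward direction is identical, and your closure identity $F_i=\mathrm{cl}(\{e_1,\ldots,e_i\})$ is exactly the content of the ``standard matroid result'' $M/F_i=\{Y\in V^*(M):\{e_1,\ldots,e_i\}\subseteq Y^{-1}(0)\}$ that the paper cites for the converse. The only difference is that you actually prove this spanning fact (by induction on the covering relations in the maximal chain) where the paper just invokes it.
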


\begin{proof} By definition of the $e_j$, for any $X^{-1}(0)\in\coat(F_i)$ we have $\{e_1,\ldots, e_i\}\subseteq F_i\subseteq X^{-1}(0)$. Conversely, by standard matroid results we know $M/{F_i}=\{Y\in V^*(M):\{e_1, \ldots, e_i\}\subseteq Y^{-1}(0)\}$. Thus if $\{e_1, \ldots, e_i\}\subseteq X^{-1}(0)$ then $X\in M/{F_i}$, and so $X^{-1}(0)\supseteq F_i$, and so $X\in\coat(F_i)$.
\end{proof}

The proof of Proposition~\ref{prop:orientable} will use two operations on sign vectors:
\begin{defn} Let $X\in\{+,0,-\}^E$.

1. If $F\subset E$, define $X/F\in\{+,0,-\}^E$ by
$$X/F(e)=\left\{\begin{array}{ll}
X(e)&\mbox{ if $e\not\in F$}\\
0&\mbox{ if $e\in F$}
\end{array}\right.$$

2. If $f\not\in E$ and $\epsilon\in\{+,0,-\}$, define $X\circ f^\epsilon\in\{+,0,-\}^{E\cup\{f\}}$ to be the extension of $X$ with $X(f)=\epsilon$.
\end{defn}

\begin{numpf} {Proof of Proposition~\ref{prop:orientable} }  
For any $X\in C^*(M)$, let $i_X=\min\{i:X(e_i)\neq 0\}$. Define $\iota(X)=\{(X(e_{i_X}))_{X^{-1}(0)}\}$.
For any $Y\in (V^*(M)-\{0\})\backslash C^*(M)$, let $$\iota(Y)=\bigcup_{X\in C^*(M)_{<Y} } \iota(X).$$
We first check that $\iota(Y)$ is a simplex in $S_{Y^{-1}(0)}$. For this we need to check:
\begin{itemize}
\item Each vertex of $\iota(Y)$ is in $S_{Y^{-1}(0)}$. This is immediate: if $X\in C^*(M)_{<Y}$ then $X^{-1}(0)\in\coat(Y^{-1}(0))$, so $X^{-1}(0)\in\coat(Y^{-1}(0))\cap A_i$ for some $i$.
\item All vertices of $\iota(Y)$ are in a common simplex of $S_{Y^{-1}(0)}$. That is,  if $\{X_1, X_2\}\subseteq C^*(M)_{<Y}$ and $\{X_1^{-1}(0), X_2^{-1}(0)\}\subseteq\coat(Y^{-1}(0))\cap A_i$ for some $i$ then $\iota(X_1)_{X_1^{-1}(0)}=\iota(X_2)_{X_2^{-1}(0)}$.  To see this: since $\{X_1^{-1}(0), X_2^{-1}(0)\}\subseteq A_i$, we have that $F_i\subseteq X_i^{-1}(0)\cap X_2^{-1}(0)$ and $F_{i+1}\not\subseteq X_i^{-1}(0)\cap X_2^{-1}(0)$. Thus, by Lemma~\ref{lem:basis}
$\{e_1, \ldots, e_i\}\subseteq X_i^{-1}(0)\cap X_2^{-1}(0)$ and $e_{i+1}\not\in X_i^{-1}(0)\cap X_2^{-1}(0)$. So $i+1=i_{X_1}=i_{X_2}$ and so $\iota(X_1)=(X_1(e_{i+1}))_{X_1^{-1}(0)}$ and $\iota(X_2)=(X_2(e_{i+1}))_{X_2^{-1}(0)}$. Since $X_1$ and $X_2$ are both less than $Y$, $X_1(e_{i+1})\neq 0$, and $X_2(e_{i+1})\neq 0$, we have $X_1(e_{i+1})=X_2(e_{i+1})$.
\end{itemize}
 Thus $\iota$ is well-defined and takes each $M_G$ to $S_G$. $\iota$ is clearly a poset injection.

The homotopy equivalences follow from Lemma~\ref{carrier}, applied to each restriction $\iota_G: \Delta (M_G-\{0\})\to \Delta (S_G)$. For every $G\in L$ and $\vec{v}\in\{+,0,-\}^r$, let $A_{\vec{v}}(G)=\{X\in V^*(M_G):X(e_{i_X})=v_{i_X}\}$. Let
$B_{\vec{v}}(G)$ be the full simplex on $\{(v_i)_C: v_i\neq 0, C\in A_i\cap\coat(G)\}$. 
Our covers  of $V^*(M_G)-\{0\}$ and $S_G$ will be $\{\Delta A_{\vec{v}}(G):\vec{v}\in\{+,-\}^r\}$ resp.$ \{B_{\vec{v}}(G):\vec{v}\in\{+,-\}^r\}$.
The order complexes of the $A_{\vec{v}}(G)$ resp. $B_{\vec{v}}(G)$ will be our covers of $V^*(M_G)-\{0\}$ resp. $S_G$.

Note that, for any $\vec{v},\vec{w}\in\{+,-,0\}^r$, we have $A_{\vec{v}}(G)\cap A_{\vec{w}}(G)=A_{\vec{v}\wedge\vec{w}}(G)$ and $B_{\vec{v}}(G)\cap B_{\vec{w}}(G)=B_{\vec{v}\wedge\vec{w}}(G)$. Also, $A_{\vec{v}}(G)=\emptyset\Leftrightarrow v_i=0$ for every $i$ such that $F_i\not\leq G \Leftrightarrow B_{\vec{v}}(G)=\emptyset$.

To see that the conditions of Lemma~\ref{carrier} are satisfied:

1. We need to see every chain $\gamma=\{X_1<\cdots<X_k\}$ in $V^*(M_G)-\{0\}$ is in some $A_{\vec{v}}(G)$. Clearly, the appropriate $\vec{v}$ has components $v_i=\max\{X_j(e):j\in[k]\}$. 

Also, it's clear that every maximal chain in $S_G$ is contained in some $B_{\vec{v}}(G)$.

2. To see that every nonempty intersection of elements of our covers is contractible, by our earlier observations we need only to show each $A_{\vec{v}}(G)$ and each $B_{\vec{v}}(G)$ is contractible. The latter is trivial. The former will be proved by induction on the number of elements of $M$, and within this, in the rank of $M$. The case of rank 1 is trivial. 

The case in which $M$ is a coordinate oriented matroid (i.e., $M$ is the rank $r$ oriented matroid on elements $\{e_1, \ldots, e_r\}$) requires just a little work. We may assume $v_1\neq 0$, since otherwise  $A_{\vec{v}}(G)\subset V^*(M/e_1)$, and so induction on rank covers it.  Let $i_0=\min\{i>1:v_i\neq 0\}$. Then the following sequence of order homotopies (Definition~\ref{defn:orderhom}) retracts $\Delta A_{\vec{v}}(G)$ to $\Delta A_{(0,\ldots, 0, v_{i_0}, v_{i_0+1},\ldots, v_r)}(G)$.
\begin{enumerate}
\item the lowering homotopy $X\to X/\{e_2, \ldots, e_{i_0-1}\}$
\item  the lowering homotopy 
$$X\to \left\{\begin{array}{cc}	X&\mbox{ if $X(e_{i_0})\in \{0, v_{i_0}\}$}\\
	X/\{e_{i_0}\}&\mbox{ if $X(e_{i_0})=-v_{i_0}$}
	\end{array}\right.$$
\item the raising homotopy $$X\to \left\{\begin{array}{cc}
		X\circ e_{i_0}^+&\mbox{ if $X(e_{i_0})=0$}\\
		X&\mbox{ otherwise}
		\end{array}\right.$$
\item  the lowering homotopy $X\to X/\{e_1\}$.
\end{enumerate}
Thus, by our induction on rank, $\Delta A_{\vec{v}}(G)$ is contractible.

Now, assume we have the result for $M\backslash\{f\}$, where $f$ is an element of $M$ not in $\{e_1, \ldots, e_r\}$. Let $d:A_{\vec{v}}(G)\to A_{\vec{v}}(G\backslash\{f\})$ be the map $X\to X\backslash\{f\}$. Then for every $Y\in A_{\vec{v}}(G\backslash\{f\})$, $d^{-1}(Y)$ has a unique minimal element, and so the upper order ideal generated by $d^{-1}(Y)$ is contractible. Thus, by Quillen's Theorem A (see Section~\ref{sec:homotopy}), $d$ is a homotopy equivalence.

3. Clearly $\iota(A_{\vec{v}}(G))\subseteq B_{\vec{v}}(G)$.

 The statement on the $\mathbb Z_2$ action is clear. 
\end{numpf}

\section {Change of flags}
Ideally, one would like to have a representation construction for matroids that does not depend on a choice of flag. Failing this, one might hope for nice topological relationship (e.g., a canonical homotopy equivalence) between any two homotopy representations of a given matroid. If $\Ff_1$ and $\Ff_2$ are complete flags in $M$, then, for every flat $G$ of $M$, $S_G(\Ff_1)$ and $S_G(\Ff_2)$ have the same vertex set. However, the simplicial complexes are in general not isomorphic. Examination  of small examples (for instance, when $M$ is a rank 3 uniform matroid on four elements) shows that $S_G(\Ff_1)\cap S_G(\Ff_2)$ can have ugly homotopy type.

The best result so far is:
\begin{prop} \label{prop:flag} For every rank $r$ matroid $M$ and complete flags $\Ff$ and $\Gg$ in $M$, there exists $\{C_0, \ldots, C_{r-1}\}$, with $C_i\in A_i(\Ff)$ for every $i$, such that the function $f$  sending each $ G_\epsilon$ with $G\in A_i(\Ff)$ and $\epsilon\in\{+,-\}$ to $(C_i)_\epsilon$ induces a simplicial homotopy equivalence $S_{\hat 0}(\Ff)\to S_{\hat 0}(\Gg)$.
\end{prop}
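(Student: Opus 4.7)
The plan is to reduce the claim to two subproblems: (i) producing a system of representatives $\{C_0,\ldots,C_{r-1}\}$ that makes $f$ a well-defined simplicial map, and (ii) verifying that the resulting $f$ is a homotopy equivalence by exhibiting both source and target as simplicial joins of zero-spheres.

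For (i), I first observe that a vertex map of the given form extends to a simplicial map $S_{\hat{0}}(\Ff)\to S_{\hat{0}}(\Gg)$ if and only if the $C_0,\ldots,C_{r-1}$ lie in pairwise distinct blocks of the $\Gg$-partition of $\coat(L)$: a maximal simplex of $S_{\hat 0}(\Ff)$ is specified by an independent choice of sign on each $\Ff$-block, so its image $\{(C_0)_{\epsilon_0},\ldots,(C_{r-1})_{\epsilon_{r-1}}\}$ is a simplex of $S_{\hat{0}}(\Gg)$ exactly when no two $C_i$ share a $\Gg$-block. To produce such an SDR, I apply Lemma~\ref{lem:flag} with $X=F_k$ and flag $\Gg$: the sequence $F_k\vee G_0,\ldots,F_k\vee G_r$ is a maximal chain in $L_{\geq F_k}$, so the set of jump indices
$$J_k\;:=\;\{\,j\in[0,r-1]:F_k\vee G_j<F_k\vee G_{j+1}\,\}$$
has cardinality exactly $\corank(F_k)=r-k$. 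Arguing exactly as in the proof of Theorem~\ref{thm:main}(2), $J_k$ coincides with $\{j:A_j(\Gg)\cap\coat(F_k)\neq\emptyset\}$. Because $\coat(F_0)\supseteq\cdots\supseteq\coat(F_r)=\emptyset$ induces a descending chain $J_0\supseteq J_1\supseteq\cdots\supseteq J_r=\emptyset$ whose sizes decrease by exactly one at each step, each successive difference $J_k\setminus J_{k+1}$ is a single element $\{j_k\}$, and $(j_0,\ldots,j_{r-1})$ is a permutation of $[0,r-1]$. Any coatom $C$ above $F_k$ whose $\Gg$-block is $A_{j_k}(\Gg)$ cannot lie above $F_{k+1}$ (else $j_k\in J_{k+1}$), so $C\in A_k(\Ff)\cap A_{j_k}(\Gg)$; I take $C_k$ to be any such coatom.

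For (ii), both $S_{\hat{0}}(\Ff)$ and $S_{\hat{0}}(\Gg)$ carry an iterated simplicial join structure
$$S_{\hat{0}}(\Ff)=K^{\Ff}_0*\cdots*K^{\Ff}_{r-1},\qquad S_{\hat{0}}(\Gg)=K^{\Gg}_0*\cdots*K^{\Gg}_{r-1},$$
where $K^{\Ff}_i$ (respectively $K^{\Gg}_i$) is the disjoint union of the two full simplices on $\{H_+:H\in A_i(\Ff)\}$ and $\{H_-:H\in A_i(\Ff)\}$ (respectively on the corresponding halves of $A_i(\Gg)$); each factor is homotopy equivalent to $S^0$. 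The map $f$ restricts on the $k$-th source factor to a map $K^{\Ff}_k\to K^{\Gg}_{j_k}$ that collapses each of the two contractible simplices of $K^{\Ff}_k$ to the single vertex $(C_k)_\pm$ of $K^{\Gg}_{j_k}$, plainly a homotopy equivalence of zero-spheres. Because $(j_0,\ldots,j_{r-1})$ is a permutation of $[0,r-1]$, the target factors $K^{\Gg}_{j_0},\ldots,K^{\Gg}_{j_{r-1}}$ exhaust the join decomposition of $S_{\hat{0}}(\Gg)$, and the standard fact that a join of homotopy equivalences is a homotopy equivalence finishes the argument.

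The main obstacle is step (i): a priori one needs a Hall-type SDR existence result for the bipartite graph of nonempty intersections $A_i(\Ff)\cap A_j(\Gg)$, which is not obvious from abstract partition theory alone. What lets us bypass Hall's theorem entirely is the rigid count $|J_k|=r-k$ forced by Lemma~\ref{lem:flag}; this forces the filtration by $\coat(F_k)$ to single out a canonical permutation $(j_0,\ldots,j_{r-1})$ with $A_k(\Ff)\cap A_{j_k}(\Gg)\neq\emptyset$ for every $k$.
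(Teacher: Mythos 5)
Your proof is correct, and it differs from the paper's in both halves, most substantially in the second. For the existence of the system of distinct representatives, the paper proves a separate lemma by induction on $r$, passing to the interval $[\hat 0, F_{r-1}]$ and the meet-flag $\{G_j\wedge F_{r-1}\}$; your argument instead works with the join-flags $\{F_k\vee G_j\}$ and extracts the permutation $(j_0,\ldots,j_{r-1})$ directly from the nested jump sets $J_0\supseteq\cdots\supseteq J_r$ with $|J_k|=r-k$. Both hinge on Lemma~\ref{lem:flag}, so this is a non-inductive variant of the same idea, and yours has the mild advantage of making the bijection between $\Ff$-blocks and $\Gg$-blocks explicit. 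For the homotopy equivalence, the paper takes the cross-polytope $P$ on the vertices $(C_i)_\pm$, observes it is a common subcomplex of $S_{\hat 0}(\Ff)$ and $S_{\hat 0}(\Gg)$, and invokes Lemma~\ref{lem:intlattice} to see that $f$ and its $\Gg$-analogue induce retractions onto $P$; you instead exhibit $S_{\hat 0}(\Ff)$ and $S_{\hat 0}(\Gg)$ as iterated joins of $S^0$-factors and write $f$ as a join of homotopy equivalences of factors. Your route buys a more transparent verification that $f$ is actually a well-defined simplicial map (which in the paper is only implicit in $P$ being a subcomplex of both complexes) and avoids the nerve machinery, at the cost of invoking the standard fact that a join of homotopy equivalences is a homotopy equivalence; the paper's route keeps everything inside the nerve framework already set up for Theorem~\ref{thm:main}. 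Both are complete.
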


Of course, this map will not preserve the combinatorics of the homotopy arrangement.

\begin{lemma} For every rank $r$ matroid $M$ and complete flags $\Ff$ and $\Gg$ in $M$, there exists $\{C_0, \ldots, C_{r-1}\}\subset\coat(M)$ such that, for every $i\neq j$, $C_i$ and $C_j$ are in different parts of the partition of $\coat(M)$ induced by $\Ff$ and  different parts of  the partition of $\coat(M)$ induced by $\Gg$.
\end{lemma}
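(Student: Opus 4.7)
My plan is to apply Hall's marriage theorem. Form the bipartite graph $H$ on vertex classes $\{A_0(\Ff),\ldots,A_{r-1}(\Ff)\}$ and $\{A_0(\Gg),\ldots,A_{r-1}(\Gg)\}$, with an edge between $A_i(\Ff)$ and $A_j(\Gg)$ whenever $A_i(\Ff)\cap A_j(\Gg)\neq\emptyset$. A perfect matching in $H$ yields the set $\{C_0,\ldots,C_{r-1}\}$: choose one witness coatom from each matched intersection, obtaining $r$ distinct coatoms with exactly one in each $\Ff$-part and one in each $\Gg$-part.

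By Hall's theorem, the matching exists iff for every $I\subseteq[0,r-1]$ the neighborhood $N(I)=\{j:A_i(\Ff)\cap A_j(\Gg)\neq\emptyset\text{ for some }i\in I\}$ satisfies $|N(I)|\geq|I|$. Equivalently (contrapositively), whenever $I,J\subseteq[0,r-1]$ satisfy $A_i(\Ff)\cap A_j(\Gg)=\emptyset$ for every $(i,j)\in I\times J$, one has $|I|+|J|\leq r$. I would prove this inequality by induction on $r$; the base $r=1$ is immediate. For the inductive step, unpacking the emptiness of cell $(i,j)$ via the definition $A_i(\Ff)=\coat(F_i)\setminus\coat(F_{i+1})$ gives the lattice identity $\coat(F_i\vee G_j)=\coat(F_{i+1}\vee G_j)\cup\coat(F_i\vee G_{j+1})$, and applying Lemma~\ref{lem:coat} to both sides yields the meet identity $F_i\vee G_j=(F_{i+1}\vee G_j)\wedge(F_i\vee G_{j+1})$ whenever none of the joins in sight equals $\hat 1$.

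To close the induction, I would contract $M$ by the atom $F_1$. By Lemma~\ref{lem:flag}, both $\Ff$ and $\Gg$ descend to complete flags in the rank $r-1$ geometric lattice $L_{\geq F_1}$, whose coatoms are precisely $\coat(F_1)\subseteq\coat(M)$; the inductive hypothesis there controls the coatoms containing $F_1$, while the coatoms not containing $F_1$ constitute exactly $A_0(\Ff)$ and are handled by a separate application of Lemma~\ref{lem:coat} inside $L$. The main obstacle is tracking index shifts: the contracted $\Gg$-flag $\{F_1\vee G_j\}$ may have repeated entries (when $G_{j+1}\leq F_1\vee G_j$), so the $\Gg$-partition of $\coat(F_1)$ inside $L$ is in general a strict refinement of the partition induced in $L_{\geq F_1}$, and this refinement must be reconciled with the inductive bound so that the rank-$r-1$ inequality lifts correctly to the rank-$r$ statement.
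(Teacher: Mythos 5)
Your Hall's-theorem framing is correct: a set $\{C_0,\dots,C_{r-1}\}$ as in the lemma is exactly a common transversal of the two $r$-part partitions, i.e., a perfect matching in the bipartite intersection graph, and your contrapositive reformulation of Hall's condition is right. Contracting by the atom $F_1$ is also a good inductive move --- arguably cleaner than the paper's own induction, which passes to $[\hat 0,F_{r-1}]$, whereas the coatoms of $L_{\geq F_1}$ really are coatoms of $M$ (namely $\coat(F_1)=\coat(M)\setminus A_0(\Ff)$). But the proposal stops exactly where the work is, and two of the three things you flag as remaining are not the real issue. First, the lattice identity $F_i\vee G_j=(F_{i+1}\vee G_j)\wedge(F_i\vee G_{j+1})$ that you extract from an empty cell is never used afterward; the inequality $|I|+|J|\leq r$ is asserted, not proved. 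Second, your "main obstacle" about refinement of partitions evaporates: by Lemma~\ref{lem:flag} the sequence $F_1\vee G_0,\dots,F_1\vee G_r$ is a maximal chain in the rank-$(r-1)$ lattice $L_{\geq F_1}$, so among its $r+1$ entries there is \emph{exactly one} repeat, say $F_1\vee G_k=F_1\vee G_{k+1}$; for $j\neq k$ the $j$th part of the contracted $\Gg$-partition of $\coat(F_1)$ equals $A_j(\Gg)\cap\coat(F_1)$ and the $k$th part is empty, so no strict refinement occurs.

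The genuine gap is the step that closes the induction. Contraction deletes row $0$ (the part $A_0(\Ff)$) and column $k$ from the intersection pattern, and the rank-$(r-1)$ statement only controls the remaining rows and columns; to lift it you must show that the lost row meets the lost column, i.e., that $A_0(\Ff)\cap A_k(\Gg)\neq\emptyset$. This is nowhere identified in your sketch, and it is precisely where Lemma~\ref{lem:coat} enters: from $G_{k+1}\leq F_1\vee G_k$ and $\rank(F_1\vee G_k)\leq \rank(G_k)+1$ one gets $F_1\vee G_k=G_{k+1}$; since $G_k=\wedge\coat(G_k)$ there is a coatom $C\geq G_k$ with $C\not\geq G_{k+1}$, and any such $C$ automatically satisfies $C\not\geq F_1$, hence $C\in A_0(\Ff)\cap A_k(\Gg)$. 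Once you have this cell, Hall's theorem is unnecessary overhead: the induction hypothesis applied to $L_{\geq F_1}$ directly hands you $C_1,\dots,C_{r-1}$ with $C_i\in A_i(\Ff)$ lying in distinct parts $A_j(\Gg)$ with $j\neq k$, and $C_0=C$ completes the transversal. As written, the proposal identifies a viable route but leaves its pivotal step as an acknowledged unknown, so it does not yet constitute a proof.
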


\begin{proof} The proof is by induction on $r$, with the case $r=1$ trivial.

Let $\Ff=\{F_0\subset\cdots\subset F_r\}$ and $\Gg=\{G_0\subset\cdots\subset G_r\}$. Consider the flag $\{G_0\cap F_{r-1}\, \ldots, G_r\cap F_{r-1}\}$ in $[\hat 0, F_{r-1}]$. By Lemma~\ref{lem:flag}, this is a complete flag in $[\hat 0, F_{r-1}]$, so there exists a unique $k$ such that $G_{k-1}\cap F_{r-1}=G_k\cap F_{r-1}$, and so $A_k(\Gg)\subset A_r(\Ff)$. Let $C_r\in A_k(\Gg)$. By the induction hypothesis applied to the flags $\Ff\backslash\{F_r\}$ and $\{G_0\cap F_{r-1}\, \ldots, G_r\cap F_{r-1}\}$ in $[\hat 0, F_{r-1}]$, 
we can choose $\{C_1, \ldots, C_{r-1}\}$ with each $C_i$ in a different part of each partition, so $\{C_1, \ldots, C_k\}$ is our desired set for $M$.
\end{proof}

\begin{numpf}{Proof of Proposition~\ref{prop:flag}} Consider the $\{C_0, \ldots, C_{r-1}\}$ of the previous  lemma. For each $i$, $C_i\in A_i(\Ff)\cap A_{i'}(\Gg)$ for some $i'$ Because each $C_i$ is in a distinct part of $\Ff$ and of $\Gg$, the cross-polytope $P$ with maximal simplices $\{\{(C_0)_{v_0},\ldots, (C_{r-1})_{v_{r-1}}\}:v_i\in\{+,-\}\mbox{ for every i}\}$ is a subcomplex of both $S_{\hat 0}(\Ff)$ and $S_{\hat 0}(\Gg)$. In fact, by Lemma~\ref{lem:intlattice}, the functions $f$ resp $f_\Gg: \{G_\epsilon:G\in\coat(M),\epsilon\in\{+,-\}\to \{(C_1)_\epsilon:,\epsilon\in\{+,-\}\}$ taking each $G_\epsilon$ in $A_i(\Ff)$ resp. $A_{i'}(\Gg)$ to $(C_i)_\epsilon$ induce retractions of $S_{\hat 0}(\Ff)$ resp. $S_{\hat 0}(\Gg)$ to $P$. 
\end{numpf}

\section{Weak maps}

\begin{defn} Let $M$ be a matroid on elements $E$. The {\bf rank} of $A\subseteq E$ is the rank of the smallest flat of $M$ containing $A$.
\end{defn}

\begin{defn} 1. Let $M$ and $N$ be matroids on elements $E$. We say there is a {\bf weak map} $M\leadsto N$ if, for every $A\subset E$, the rank of $A$ in $M$ is greater than or equal to the rank of $A$ in $N$.

2. Let $M$ and $N$ be oriented matroids on elements $E$. We say there is a {\bf weak map} $M\leadsto N$ if, for every $X\in V^*(N)$, there exists $Y\in V^*(M)$ such that $Y\geq X$.
\end{defn}

(If $M\leadsto N$ is a weak map of oriented matroids, then there is a weak map of the underlying matroids as well.)

Just as oriented matroids have topological representations, so do their weak maps:
\begin{thm} (\cite{weakmaps})  Let $M$ and $N$ be oriented matroids of the same rank on elements $E$. Then $M\leadsto N$ if and only if there is a surjective poset map $V^*(M)-\{0\}\to V^*(N)-\{0\}$ taking each $X$ to some $Y\leq X$.

If such a map exists, then it is a homotopy equivalence.
\end{thm}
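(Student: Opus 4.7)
The forward direction is immediate: if a surjective poset map $f : V^*(M) - \{0\} \to V^*(N) - \{0\}$ with $f(X) \le X$ exists, then for each nonzero $Y \in V^*(N)$ surjectivity supplies some $X$ with $f(X) = Y$, hence $Y = f(X) \le X$ verifies the weak map condition (the case $Y = 0$ is vacuous).

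For the converse, I would realize the weak map topologically. By the Topological Representation Theorem, fix an arrangement $\Aa = \{(S_e, S_e^+, S_e^-) : e \in E\}$ of oriented pseudospheres on $S^{r-1}$ representing $M$. The core construction is a second arrangement $\Bb = \{(T_e, T_e^+, T_e^-) : e \in E\}$ on the same $S^{r-1}$ representing $N$ and \emph{refined by} $\Aa$, in the sense that every open face of $\Bb$ is a union of open faces of $\Aa$ with matching sign data. My plan is to obtain $\Bb$ by inducting on a factorization of $M \leadsto N$ into elementary weak maps, realizing each step as a geometric degeneration in which a single pseudosphere of the current arrangement slides onto the union of others prescribed by that step; the equality of ranks between $M$ and $N$ ensures that the ambient $S^{r-1}$ is preserved throughout. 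Given $\Bb$, define $f(X) = Y$ by letting $\tau_Y$ be the unique open $\Bb$-face containing the open $\Aa$-face $\sigma_X$ of sign $X$. Componentwise, $\sigma_X \subseteq \overline{\tau_Y}$ translates exactly to $Y \le X$, so $f(X) \le X$; surjectivity and the poset property follow at once from the refinement.

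For the homotopy equivalence, apply Quillen's Theorem A to $f$. For each $Y \in V^*(N) - \{0\}$ the upper fiber $f^{-1}\bigl((V^*(N) - \{0\})_{\ge Y}\bigr)$ consists of exactly those $\Aa$-sign vectors whose open faces lie in the closed $\Bb$-face $\overline{\tau_Y}$; this closure is a closed ball carrying a regular CW decomposition via the $\Aa$-faces, so its face poset has a contractible order complex. Quillen's Theorem A then yields a homotopy equivalence on order complexes. The main obstacle throughout is the inductive construction of $\Bb$---the simultaneous topological realization of the weak map as a degeneration of pseudosphere arrangements---which is where the technical substance of \cite{weakmaps} lies; everything else is bookkeeping translating geometry into combinatorics.
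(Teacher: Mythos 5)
This theorem is quoted in the paper from \cite{weakmaps} without proof, so there is no internal argument to compare against; judged on its own terms, your proposal has a genuine gap at its central step. The easy direction (a surjection $X\mapsto Y\le X$ implies the weak map condition) is fine. But your hard direction rests entirely on the existence of a pseudosphere arrangement $\Bb$ representing $N$ that is refined, with compatible sign data ($T_e^{\pm}\subseteq S_e^{\pm}$), by a fixed arrangement $\Aa$ representing $M$. Such a simultaneous topological realization of a weak map as a degeneration of pseudosphere arrangements is not a known consequence of the Topological Representation Theorem; it is essentially the open problem that the cited combinatorial theorem is designed to circumvent. Your proposed induction does not close the gap: a rank-preserving weak map need not factor into steps each of which moves a single pseudosphere, and even for a single-element step you would need the new pseudosphere to lie in the closure of the prescribed cells of the old arrangement while all other pseudospheres stay fixed --- an extension-realizability statement that you do not prove. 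So the construction of $\Bb$ is assumed rather than established, and it is exactly where the content of the theorem lies.

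For the record, the argument in \cite{weakmaps} is purely combinatorial: the set $D(X)=\{Y\in V^*(N):Y\le X\}$ is closed under composition (any two of its elements conform, both being $\le X$), hence has a unique maximal element when it is nonzero; rank preservation is what guarantees $D(X)\neq\{0\}$ for $X\neq 0$; then $f(X):=\max D(X)$ is the desired surjection, and the homotopy equivalence comes from Quillen's Theorem A applied to fibers shown contractible by combinatorial (order-homotopy) means. One smaller slip in your sketch: the Quillen fiber $f^{-1}\bigl((V^*(N)-\{0\})_{\ge Y}\bigr)$ corresponds in your geometric picture to the open star of $\tau_Y$ in the $\Bb$-cell structure, not to the closed face $\overline{\tau_Y}$ (the closure corresponds to $\{Y'\le Y\}$); the open star is still contractible, so that part is repairable, unlike the construction of $\Bb$.
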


Any such poset map takes an atom  $\iota^{-1}(G_\epsilon)$ to some atom $\iota^{-1}(H_\epsilon)$ with $G\subseteq H$.

Alas, there is no analogous result for homotopy representations of matroids: a weak map $M\leadsto N$ does not induce a topological map of their homotopy representations. To make this explicit, we need a new notation: $S_G({\Ff}, M)$ will denote the homotopy sphere $S_G({\Ff})$ in a homotopy representation of a matroid $M$. In what follows we will crush all hope of a  weak map $M\leadsto N$ inducing a poset map $S_{\hat 0}({\Ff}, M)\to S_{\hat 0}({\Ff}, N)$ taking each $G_\epsilon$ to some $H_\epsilon$ with $G\subseteq H$. There are two obstacles:

1. A complete flag $\Ff$ in $M$ may not be a complete flag in $N$. Thus $S_G({\Ff}, N)$ may not be defined. 

2. Even if $\Ff$ is a complete flag in both $M$ and $N$, there may not be an induced poset map with the desired properties. Consider, for instance, the weak map $M\leadsto N$ of rank 3 matroids on elements $[4]$ given by the hyperplane arrangements (actually, arrangements of equators in $S^2$)  shown in Figure~\ref{fig:weakmaps}. $\Ff=\{\hat 0<\{1\}<\{1,2\}<[4]\}$ is a complete flag in both matroids. 
\begin{figure}
\input{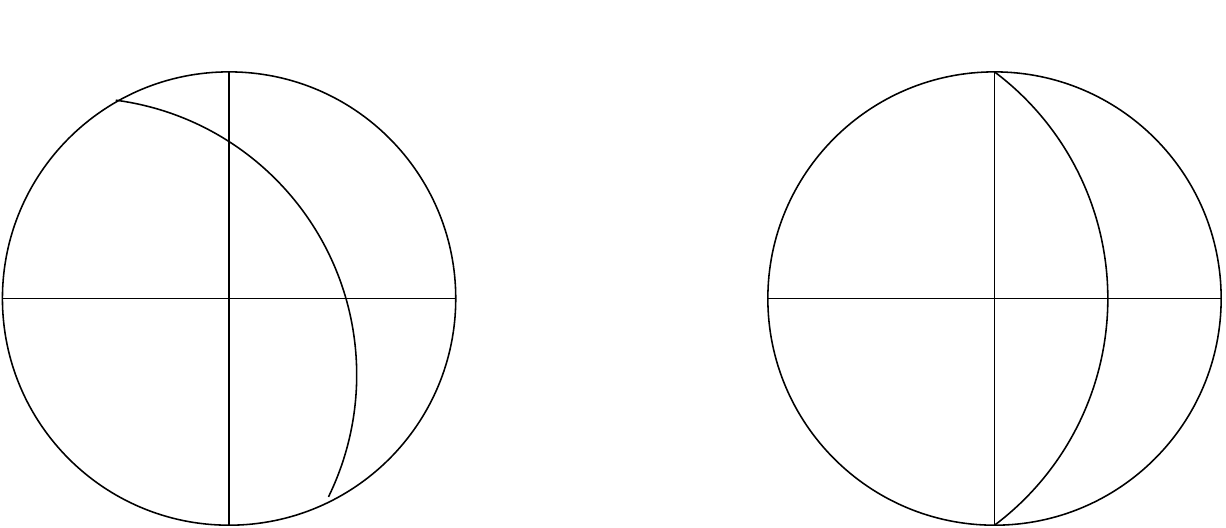_t}
\caption{A weak map with no homotopy representation\label{fig:weakmaps}}
 \end{figure}

Any poset map $S_{\hat 0}({\Ff}, M)\to S_{\hat 0}({\Ff0}, N)$ taking each $S_G({\Ff},M)$ to some $S_H({\Ff},N)$ with $G\subseteq H$ would take $\{3,4\}_+$ to $\{1,3,4\}_+$ and take $\{1,4\}_-$ to $\{1,3,4\}_-$. Thus it would take the 1-simplex $\{\{3,4\}_+, \{1,4\}_-\}$ to something greater than both $\{1,3,4\}_+$ and $\{1,3,4\}_-$.  But there is no such simplex in $S_{\hat 0}({\Ff}, N)$.

\subsection{Homotopy tools}\label{sec:homotopy}
This section briefly reviews homotopy methods used elsewhere in the paper. For an overview of these and other homotopy methods, see~\cite{Bj}.

"Theorem A" of Quillen (\cite{Q}) was originally phrased in terms of categories. It has been recast in various ways in terms of posets and simplicial complexes (cf. \cite{Bj}). The version used in this paper is taken from~\cite{Z2}.

\begin{QTA} Let $f : P \to Q$ be a poset map.  If for all $q \in Q$, 
$\|f^{-1}(Q_{\geq q})\|$ is contractible,
then $\|f\|: \|P\| \to \|Q\|$ is a homotopy equivalence.
\end{QTA}

\begin{defn} Let $C=\{U_i:i\in I\}$ be a collection of sets. The \textbf{nerve} of $C$ is defined to be $N(C)=\{J\subseteq I:\cap_{i\in J}U_i\neq\emptyset\}$, ordered by inclusion.
\end{defn}

\begin{lemma}\label{lem:intlattice} Let $C$ and $D$ be simplicial complexes. If $\max(C)$ and $\max(D)$ have isomorphic nerves then $C\simeq D$.
\end{lemma}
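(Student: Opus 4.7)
The plan is to derive the lemma from the standard Nerve Lemma applied to the cover of a simplicial complex by its closed facets. Specifically, I will first show that any simplicial complex $C$ satisfies $C\simeq N(\max(C))$, and then combine this with the analogous statement for $D$ together with the hypothesized isomorphism of nerves.

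For the first step, let $\max(C)=\{\sigma_i:i\in I\}$ and let $\Delta_i$ denote the closed subcomplex of $C$ consisting of $\sigma_i$ and all of its faces. Every simplex of $C$ is contained in some facet, so $\{\Delta_i\}_{i\in I}$ is a cover of $C$ by contractible subcomplexes. For any $J\subseteq I$, the intersection $\bigcap_{i\in J}\Delta_i$ is exactly the face of $C$ spanned by the common vertices of $\{\sigma_i:i\in J\}$; when non-empty it is a single simplex, hence contractible. The Nerve Lemma for finite covers of a simplicial complex by subcomplexes whose non-empty intersections are all contractible (e.g., the version surveyed in~\cite{Bj}) therefore yields $C\simeq N(\max(C))$.

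The same argument applied to $D$ gives $D\simeq N(\max(D))$. Since by hypothesis $N(\max(C))\cong N(\max(D))$ as abstract simplicial complexes, their geometric realizations are homeomorphic, so
$$C\simeq N(\max(C))\cong N(\max(D))\simeq D.$$
The only subtlety is invoking the correct form of the Nerve Lemma for closed covers rather than open covers, but for a cover of a simplicial complex by subcomplexes with contractible non-empty intersections this is entirely standard, so no serious obstacle arises; alternatively one could argue via the crosscut theorem applied to the antichain $\max(C)$ in the face poset of $C$, which produces the same homotopy equivalence.
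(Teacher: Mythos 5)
Your proof is correct and follows the same route as the paper, which simply cites the Nerve Theorem (Theorem 10.6 in~\cite{Bj}); you have just spelled out the cover by closed facets and checked that finite intersections are single simplices, which is exactly the intended specialization.
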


\begin{proof} This is a special case of the Nerve Theorem (cf. Theorem 10.6 in~\cite{Bj})
\end{proof}

\begin{lemma}\label{carrier} Let $f:X\to Y$ be a simplical map, $A=(A_i)_{i\in I}$ and $B=(B_i)_{i\in I}$ be families of subcomplexes of $X$ resp. $Y$ such that
\begin{itemize}
\item $X=\cup_{i\in I} A_i$ and $Y=\cup_{i\in I} B_i$
\item Every nonempty intersection of elements of $A$ resp. $B$ is contractible
\item For every $J\subseteq I$, $\cap_{i\in J}A_i\neq\emptyset \Leftrightarrow \cap_{i\in J} B_i\neq\emptyset$, and
\item For every $i$, $f(A_i)\subseteq B_i$.
\end{itemize}
Then $f$ is a homotopy equivalence.
\end{lemma}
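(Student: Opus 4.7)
The plan is a functorial version of the Nerve Theorem, implemented through Quillen's Theorem~A on an auxiliary poset. By hypothesis~3 the nerves $N(A)$ and $N(B)$ coincide as posets; call this common poset $N$, and for $J\in N$ set $A_J=\bigcap_{i\in J}A_i$ and $B_J=\bigcap_{i\in J}B_i$, both contractible by hypothesis~2. Adopt the convention $\emptyset\in N$ with $A_\emptyset=X$ and $B_\emptyset=Y$. Let $F(X)$ and $F(Y)$ denote the face posets of $X$ and $Y$, whose order complexes are the barycentric subdivisions, so it suffices to show that the induced map $f_*:F(X)\to F(Y)$ is a homotopy equivalence.

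Introduce the auxiliary posets
$$\tilde X=\{(\sigma,J):\sigma\in F(X),\ J\in N,\ \sigma\subseteq A_J\},\qquad \tilde Y=\{(\tau,J):\tau\in F(Y),\ J\in N,\ \tau\subseteq B_J\},$$
each ordered componentwise (so $(\sigma,J)\le(\sigma',J')$ iff $\sigma\subseteq\sigma'$ and $J\subseteq J'$), together with the coordinate projections $\pi_1^X,\pi_2^X,\pi_1^Y,\pi_2^Y$. For each $\sigma_0\in F(X)$, the preimage $(\pi_1^X)^{-1}(F(X)_{\ge\sigma_0})$ has unique minimum $(\sigma_0,\emptyset)$, hence is contractible, so Quillen's Theorem~A gives that $\pi_1^X$ is a homotopy equivalence, and similarly for $\pi_1^Y$. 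For each $J_0\in N$, every $(\sigma,J)$ in $(\pi_2^X)^{-1}(N_{\ge J_0})$ satisfies $\sigma\subseteq A_J\subseteq A_{J_0}$, so the rule $r(\sigma,J):=(\sigma,J_0)$ is a well-defined poset retraction of this preimage onto $\{(\sigma,J_0):\sigma\in F(A_{J_0})\}\cong F(A_{J_0})$; the pointwise inequality $r\le\mathrm{id}$ yields an order homotopy showing the preimage is homotopy equivalent to $F(A_{J_0})$, which is contractible by hypothesis~2. Hence $\pi_2^X$, and likewise $\pi_2^Y$, are homotopy equivalences. The hypothesis $f(A_i)\subseteq B_i$ gives $f_*(A_J)\subseteq B_J$ for every $J\in N$, so $(\sigma,J)\mapsto(f_*(\sigma),J)$ defines a poset map $\tilde f:\tilde X\to\tilde Y$ with $\pi_2^Y\circ\tilde f=\pi_2^X$ and $\pi_1^Y\circ\tilde f=f_*\circ\pi_1^X$. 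Two applications of the 2-out-of-3 principle---first via the $\pi_2$'s to deduce that $\tilde f$ is a homotopy equivalence, then via the $\pi_1$'s to deduce that $f_*$ is---finish the proof.

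The main obstacle is the argument that $\pi_2^X$ is a homotopy equivalence, where hypothesis~2 must actually be used. Without a careful choice of ordering on $\tilde X$, the retraction $r$ is neither a poset map nor comparable to $\mathrm{id}$; once these are arranged, contractibility of each $A_J$ propagates to contractibility of every Quillen fiber, after which the identification of $\tilde X$ and $\tilde Y$ through the common nerve $N$ and the compatibility of $\tilde f$ with the projections make the rest of the argument essentially formal.
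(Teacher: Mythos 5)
Your overall strategy --- comparing $X$ and $Y$ through their common nerve $N$ using Quillen's Theorem A and two-out-of-three --- is the same as the paper's; the paper maps $X$ and $Y$ directly to the nerve by sending each simplex to the smallest intersection containing it, whereas you interpose the auxiliary poset of pairs. Both are standard implementations of the Nerve Theorem. However, there is a genuine error in the step you yourself identify as the crux. Having adopted the convention $\emptyset\in N$ with $A_\emptyset=X$, the Quillen fiber of $\pi_2^X$ over $J_0=\emptyset$ is $(\pi_2^X)^{-1}(N_{\geq\emptyset})=\tilde X$, and your own retraction $(\sigma,J)\mapsto(\sigma,\emptyset)$ shows this fiber is homotopy equivalent to $F(A_\emptyset)=F(X)$. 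That is \emph{not} contractible in general: hypothesis 2 covers only nonempty intersections of (one or more) elements of $A$, not the empty intersection, and in the paper's application $X$ is a homotopy sphere. So Quillen's Theorem A does not apply to $\pi_2^X$ as written. Indeed, with $\emptyset\in N$ the poset $N$ is a cone with apex $\emptyset$, so if both $\pi_1^X$ and $\pi_2^X$ were homotopy equivalences you would conclude that $X$ is contractible, and the lemma would be vacuous for the spaces it is applied to.

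The repair is routine but must be made: delete $\emptyset$ from $N$. The $\pi_2$ fibers are then handled exactly as you wrote, but the $\pi_1^X$ fiber over $\sigma_0$ no longer has a minimum. Instead, retract it by the lowering map $(\sigma,J)\mapsto(\sigma_0,J)$ onto $\{(\sigma_0,J):J\in N,\ \sigma_0\subseteq A_J\}$, and observe that this poset has a maximum element $J_{\max}(\sigma_0)=\{i:\sigma_0\subseteq A_i\}$ --- nonempty by hypothesis 1, and lying in $N$ because $\sigma_0$ witnesses $\bigcap_{i\in J_{\max}(\sigma_0)}A_i\neq\emptyset$ --- hence is contractible. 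With that correction your argument is complete and, modulo the detour through the auxiliary poset, coincides with the paper's proof.
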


\begin{proof} Consider the commutative diagram
$$\begin{array}{ccc}
X&\stackrel{f}{\to}&Y\\
\downarrow& &\downarrow\\
N(\Gamma)&\stackrel{f}{\to}&N(\Delta)
\end{array}$$
where the vertical maps send each element to the smallest intersection containing it. Quillen's Theorem A  says that the vertical maps are homotopy equivalences, and the bottom map is an isomorphism. Thus the top map is a homotopy equivalence as well.
\end{proof}

\begin{defn}\label{defn:orderhom}  Let $P$ be a poset.

1. A function $f:P\to P$ such that $f(x)\leq x$ for all $x$ is called a \textbf{lowering homotopy}.

2. A function $f:P\to P$ such that $f(x)\geq x$ for all $x$ is called a \textbf{raising homotopy}.

An \textbf{order homotopy} is a function that is either a lowering or raising homotopy.
\end{defn}
 
\begin{thm}(cf. Corollary 10.12 in~\cite{Bj}) If $f$ is an order homotopy then $f$ induces a homotopy equivalence between $P$ and $f(P)$.
\end{thm}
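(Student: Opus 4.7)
The plan is to reduce to the lowering case and then to apply a standard homotopy criterion for comparable poset maps. By the symmetry of the definition (replace $P$ with $P^{\mathrm{op}}$, whose order complex has the same geometric realization), we may assume $f$ is a lowering homotopy, i.e.\ $f(x)\leq x$ for all $x\in P$. We also take $f$ to be order-preserving, as one needs to realize the ``homotopy'' as a simplicial map; without this hypothesis $f(P)$ is not canonically a subposet and $\|f\|$ is not simplicial.

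The key step is to establish the following auxiliary lemma: if $g,h:P\to Q$ are poset maps with $g(x)\leq h(x)$ for every $x\in P$, then $\|g\|$ and $\|h\|$ are homotopic as maps $\|P\|\to \|Q\|$. I would prove this by building a combinatorial homotopy on the product poset $P\times [1]$ (with $[1]=\{0<1\}$): the assignment $H(x,0)=g(x)$, $H(x,1)=h(x)$ is order-preserving precisely because $g\leq h$ pointwise, and the order complex $\|P\times [1]\|$ is known to deformation retract onto a prism $\|P\|\times [0,1]$ whose ends realize $\|g\|$ and $\|h\|$. (Equivalently, one can display an explicit simplicial homotopy by triangulating $\|P\|\times [0,1]$ using the standard staircase subdivision.)

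Granting this lemma, I would finish the proof as follows. Let $i\colon f(P)\hookrightarrow P$ be the inclusion, and regard $f$ as a surjection $P\to f(P)$. For every $x\in P$ we have $(i\circ f)(x)=f(x)\leq x=\mathrm{id}_P(x)$, so by the lemma $\|i\|\circ\|f\|\simeq\mathrm{id}_{\|P\|}$. Similarly, for every $y\in f(P)$ we have $(f\circ i)(y)=f(y)\leq y=\mathrm{id}_{f(P)}(y)$, so $\|f\|\circ\|i\|\simeq \mathrm{id}_{\|f(P)\|}$. Thus $\|f\|$ and $\|i\|$ are homotopy inverses, giving the required homotopy equivalence $\|P\|\simeq\|f(P)\|$.

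The main technical obstacle is the auxiliary lemma comparing $g$ and $h$. Once a clean proof of the prism construction on $P\times [1]$ is in hand, everything else is a formal two-line argument. One could also bypass the prism lemma by invoking Quillen's Theorem A directly: for each $y\in f(P)$, the fiber $f^{-1}((f(P))_{\geq y})$ is the upper set $\{x\in P: f(x)\geq y\}$, and one shows this is contractible because it contains $y$ itself and is closed upward, so it is a cone with apex any minimal element above $y$; this is an alternate route that may be cleaner to write and avoids building the prism explicitly.
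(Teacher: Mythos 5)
Your main argument is correct, and it is essentially the standard proof of this fact; the paper itself gives no proof at all, only the citation to Bj\"orner's Corollary 10.12, whose proof there is exactly your route (the ``comparable maps are homotopic'' lemma applied to $i\circ f\leq \mathrm{id}_P$ and $f\circ i\leq \mathrm{id}_{f(P)}$, with the prism $P\times\{0<1\}$ furnishing the homotopy). Your observation that $f$ must be assumed order-preserving is a fair one: the paper's Definition of an order homotopy omits this, and without it the statement is false (a non-monotone lowering map can collapse a connected poset onto a discrete one), so the hypothesis is implicitly part of the cited result.

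One caution about your proposed ``alternate route'' via Quillen's Theorem A: as stated it does not work. For a \emph{lowering} $f$ and $y\in f(P)$ you have $f(y)\leq y$, not $f(y)\geq y$, so $y$ need \emph{not} belong to the fiber $\{x\in P: f(x)\geq y\}$; and even granting that this set is an upper set of $P$, an upper set with several minimal elements is not a cone, so ``cone with apex any minimal element above $y$'' is not a proof of contractibility. Contractibility of these fibers can be established, but it requires an actual argument (typically the same comparison-of-maps lemma you already proved, applied inside the fiber), so the Theorem A detour saves nothing. Since you present it only as an aside, your proof stands on the prism argument alone, which is complete.
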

 
 \section{Acknowledgement}
 Thanks to Joseph Bonin and Edward Swartz for helpful comments.
 
\bibliographystyle{alpha}
\bibliography{biblio}

\newcommand{\etalchar}[1]{$^{#1}$}
\begin{thebibliography}{BLVS{\etalchar{+}}99}

\bibitem[AD02]{Z2}
Laura Anderson and James~F. Davis.
\newblock Mod 2 cohomology of combinatorial {G}rassmannians.
\newblock {\em Selecta Math. (N.S.)}, 8(2):161--200, 2002.

\bibitem[And01]{weakmaps}
Laura Anderson.
\newblock Representing weak maps of oriented matroids.
\newblock {\em European J. Combin.}, 22(5):579--586, 2001.
\newblock Combinatorial geometries (Luminy, 1999).

\bibitem[Bj{\"o}95]{Bj}
A.~Bj{\"o}rner.
\newblock Topological methods.
\newblock In {\em Handbook of combinatorics, {V}ol.\ 1,\ 2}, pages 1819--1872.
  Elsevier, Amsterdam, 1995.

\bibitem[BLVS{\etalchar{+}}99]{BLSWZ}
Anders Bj{\"o}rner, Michel Las~Vergnas, Bernd Sturmfels, Neil White, and
  G{\"u}nter~M. Ziegler.
\newblock {\em Oriented matroids}, volume~46 of {\em Encyclopedia of
  Mathematics and its Applications}.
\newblock Cambridge University Press, Cambridge, second edition, 1999.

\bibitem[Fai86]{faigle}
Ulrich Faigle.
\newblock Lattices.
\newblock In {\em Theory of matroids}, volume~26 of {\em Encyclopedia Math.
  Appl.}, pages 45--61. Cambridge Univ. Press, Cambridge, 1986.

\bibitem[FL78]{FL}
Jon Folkman and Jim Lawrence.
\newblock Oriented matroids.
\newblock {\em J. Combin. Theory Ser. B}, 25(2):199--236, 1978.

\bibitem[Qui73]{Q}
D.~Quillen.
\newblock Higher algebraic ${K}$-theory, {I}: Higher ${K}$-theories.
\newblock In O.~Ya Viro, editor, {\em Proc. Conf., Battelle Memorial Inst.,
  Seattle, Wash., 1972}, number 341 in Lecture Notes in Mathematics, pages
  85--147. Springer-Verlag, 1973.

\bibitem[Swa03]{Ed}
E.~Swartz.
\newblock Topological representations of matroids.
\newblock {\em J. Amer. Math. Soc.}, 16(2):427--442 (electronic), 2003.

\end{thebibliography}

\vfill

\emph{\begin{tabular}{l}
Department of Mathematics\\
Binghamton University   \\
P.O. Box 6000\\
Binghamton, NY 13902-6000\\[3pt]
laura@math.binghamton.edu
       \end{tabular}}
\end{document}